\newcommand{\loco}{{\sc{Loco}}\xspace}
\newcommand{\samp}{n}
\newcommand{\subsamp}{m}
\newcommand{\dims}{p}
\newcommand{\dimss}{d}
\newcommand{\x}{\vec{X}}
\newcommand{\pp}{\vec{\phi}}
\newcommand{\cls}{\hat{\beta}_d^{\vec{\phi}}}
\begin{document}

\title*{Random Projections For Large-Scale Regression}
\author{Gian-Andrea Thanei, Christina Heinze and Nicolai Meinshausen}
\institute{Gian-Andrea Thanei \at ETH Z\"urich,  R\"amistrasse 101
8092 Z\"urich, Switzerland, \email{thanei@stat.math.ethz.ch} \and 
Christina Heinze \at ETH Z\"urich, R\"amistrasse 101
8092 Z\"urich, Switzerland, \email{heinze@stat.math.ethz.ch} \and
Nicolai Meinshausen \at ETH Z\"urich,  R\"amistrasse 101
8092 Z\"urich, Switzerland, \email{meinshausen@stat.math.ethz.ch}}

\maketitle

\abstract{Fitting linear regression models can be computationally very
  expensive in large-scale data analysis tasks if the sample size and
  the number of variables are very large. 
Random projections are extensively used as a dimension reduction tool
in machine learning and statistics. We discuss the applications of
random projections in linear regression problems, developed to decrease computational costs, and give an overview of the
theoretical guarantees of the generalization error.
 It can be shown that the combination of random projections with least
 squares regression leads to similar recovery as ridge regression and principal
 component regression. We also discuss possible improvements when
 averaging over multiple random projections, an approach that lends
 itself easily to parallel implementation.}

\section{Introduction}
\label{sec:1}

Assume we are given a data matrix $\x \in \mathbb{R}^{n \times p}$ ($n$ samples of a $p$-dimensional random variable) and a response vector $\vec{\vec{Y}} \in \mathbb{R}^n$. We assume a linear model for the data where $\vec{Y}=\vec{X}\beta+\varepsilon$ for some regression coefficient  $\beta \in \mathbb{R}^p$ and $\varepsilon$  i.i.d. mean-zero noise. Fitting a regression model by standard least squares or ridge regression requires $\mathcal{O}(n p^2)$ or $\mathcal{O}(p^3)$ flops. In the situation of large-scale ($n, p$ very large) or high dimensional ($p \gg n$) data these algorithms are not applicable without having to pay a huge computational price. 

Using a random projection, the data can be ``compressed'' either row- or column-wise. Row-wise compression was proposed and discussed in \citet{Zhou:2007,Dhillon:2013wz, McWilliams:2014}. These approaches replace the  least-squares estimator 
\begin{equation}\label{eq:pre} \underset{\gamma \in \mathbb{R}^p}{\operatorname{argmin}} \| \vec{Y} - \x \gamma \|_2^2 \qquad\mbox{ with the estimator } \qquad \underset{\gamma \in \mathbb{R}^p}{\operatorname{argmin}} \| \vec{\psi} \vec{Y} - \vec{\psi} \x \gamma \|_2^2 ,\end{equation}
where the matrix 
$\vec{\psi} \in \mathbb{R}^{m \times n}$ ($m \ll n$) is a random projection matrix and has, for example, i.i.d. $\mathcal{N}(0, 1)$ entries. Other possibilities for the choice of $\vec{\psi}$ are discussed below. The high-dimensional setting and $\ell_1$-penalized regression is considered in \citet{Zhou:2007}, where it is shown that a sparse linear model can be recovered from the projected data  under certain conditions.  The optimization problem is  still $p$-dimensional, however,  and computationally expensive if the number of variables is very large.  

Column-wise compression addresses this later issue by reducing the problem to a $d$-dimensional optimization with  $d\ll p$ by replacing the least-squares estimator 
\begin{equation}\label{eq:post} \underset{\gamma \in \mathbb{R}^p}{\operatorname{argmin}} \| \vec{Y} - \x \gamma \|_2^2 \qquad\mbox{ with the estimator } \qquad \vec{\vec{\phi}} \; \underset{\gamma \in \mathbb{R}^d}{\operatorname{argmin}}\| \vec{Y}-\vec{X}\vec{\phi} \gamma \|_2^2, \end{equation}
where the random projection matrix is now $\vec{\phi} \in \mathbb{R}^{p \times d}$ (with $d \ll p$). 
By right multiplication to the data matrix $\x$ we transform the data matrix to $\x \vec{\phi} $ and thereby reduce the number of variables from $p$ to $d$ and thus reducing computational complexity. The Johnson-Lindenstrauss Lemma \citep{jl-orig,jl-gauss-proof,jl-gauss-proof2} guarantees that the distance between two transformed sample points is approximately preserved in the column-wise compression.

Random projections have also been considered under the aspect of preserving privacy ~\citep{Blocki:2012}. By pre-multiplication with a random projection matrix as in~\eqref{eq:pre} no observation in the resulting matrix can be identified with one of the original data points. Similarly, post-multiplication as in~\eqref{eq:post} produces new variables that do not reveal the realized values of the original variables.

In many applications the random projection used in practice falls under the class of Fast Johnson-Lindenstrauss Transforms (FJLT) \citep{FJLT}. One instance of such a fast projection is the Subsampled Randomized Hadamard Transform (SRHT) \citep{Tropp:2010uo}. Due to its recursive definition, the matrix-vector product has a complexity of $\mathcal{O}(\dims \log(\dims))$, reducing the cost of the projection to  $\mathcal{O}(\samp \dims \log(\dims))$.
Other proposals that lead to speedups compared to a Gaussian random projection matrix include random sign or sparse random projection matrices \citep{achlioptas2003}. Notably, if the data matrix is sparse, using a sparse random projection can exploit sparse matrix operations. Depending on the number of non-zero elements in $\x$, one might prefer using a sparse random projection over a FJLT that cannot exploit sparsity in the data. 
Importantly, using $\x\pp$ instead of $\x$ in our regression algorithm of choice can be disadvantageous if $\x$ is extremely sparse and $\dimss$ cannot be chosen to be much smaller than $\dims$. (The projection dimension $\dimss$ can be chosen by cross validation.) As the multiplication by $\pp$ ``densifies'' the design matrix used in the learning algorithm the potential computational benefit of sparse data is not preserved.

For OLS and row-wise compression as in~\eqref{eq:pre}, where $\samp$ is very large and $ \dims < \subsamp < \samp$, the SRHT (and similar FJLTs) can be understood as a subsampling algorithm. It preconditions the design matrix by rotating the observations to a basis where all points have approximately uniform leverage \citep{Dhillon:2013wz}. This justifies uniform subsampling in the projected space which is applied subsequent to the rotation in order to reduce the computational costs of the OLS estimation. Related ideas can be found in the way columns and rows of $\x$ are sampled in a CUR-matrix decomposition \citep{mahoney2009cur}.  While the approach in \citet{Dhillon:2013wz} focuses on the concept of leverage, \citet{McWilliams:2014} propose an alternative scheme that allows for outliers in the data and makes use of the concept of influence \citep{Cook:1977}. Here, random projections are used to approximate the influence  of each observation which is then used in the subsampling scheme to determine which observations to include in the subsample.

Using random projections column-wise as in~\eqref{eq:post} as a dimensionality reduction technique in conjunction with ($\ell_2$ penalized) regression has been considered in \citet{Lu:2013}, \citet{kaban:2014} and \citet{clsr}. The main advantage of these algorithms is the computational speedup while preserving predictive accuracy. Typically, a variance reduction is traded off against an increase in bias.
In general, one disadvantage of reducing the dimensionality of the data is that the coefficients in the projected space are not interpretable in terms of the original variables. Naively, one could reverse the random projection operation by projecting the coefficients estimated in the projected space back into the original space as in~\eqref{eq:post}. For prediction purposes this operation is irrelevant, but  it can be shown that this estimator does not approximate the optimal solution in the original $p$-dimensional coefficient space well \citep{zhang2012recovering}. As a remedy, \cite{zhang2012recovering} propose to find the dual solution in the projected space to recover the optimal solution in the original space. The proposed algorithm approximates the solution to the original problem accurately if the design matrix is low-rank or can be sufficiently well approximated by a low-rank matrix.

Lastly, random projections have been used as an auxiliary tool. As an example, the goal of \cite{mcwilliams2014loco} is to  distribute ridge regression across variables with an algorithm called \loco. The design matrix is split across variables and the variables are distributed over processing units (workers). Random projections are used to preserve the dependencies between all variables in that each worker uses a randomly projected version of the variables residing on the other workers in addition to the set of variables assigned to itself. It then solves a ridge regression using this local design matrix. The solution is the concatenation of the coefficients found from each worker and the solution vector lies in the original space so that the coefficients are interpretable. Empirically, this scheme achieves large speedups while retaining good predictive accuracy. Using some of the ideas and results outlined in the current manuscript, one can show that the difference between the full solution and the coefficients returned by \loco is bounded.


Clearly, row- and column-wise compression can also be applied simultaneously or column-wise compression can be used together with subsampling of the data instead of row-wise compression. In the remaining sections, we will focus on the column-wise compression as it poses more difficult challenges in terms of statistical performance guarantees. While row-wise compression just reduces the effective sample size and can be expected to work in general settings as long as the compressed dimension $m<n$ is not too small \citep{Zhou:2007}, column-wise compression can only work well if certain conditions on the data are satisfied and we will give an overview of these results. If not mentioned otherwise, we will refer with compressed regression and random projections to the column-wise compression.

The structure of the manuscript is as follows:
We will give an overview of bounds on the estimation accuracy in the following section \ref{sec:2}, including both known results and new contributions in the form of tighter bounds. In Section \ref{sec:3} we will discuss the possibility and properties of variance-reducing averaging schemes, where estimators based on different realized random projections are aggregated. Finally, Section \ref{sec:discuss} concludes the manuscript with a short discussion.


\section{Theoretical Results}
\label{sec:2}
We will discuss in the following the properties of the column-wise
compressed estimator as in~\eqref{eq:post}, which is defined as 
\begin{equation}\label{eq:post2}
\cls\; =\; \vec{\vec{\phi}} \; \underset{\gamma \in \mathbb{R}^d}{\operatorname{argmin}}\| \vec{Y}-\vec{X}\vec{\phi} \gamma \|_2^2,
\end{equation}
where we assume that $\phi$ has i.i.d. $\mathcal{N}(0,1/d)$ entries. This estimator will be referred to as the
compressed least squares estimator (CLSE) in the following. We will focus on
the unpenalized form as in~\eqref{eq:post2} but note that similar
results also apply to estimators that put an additional penalty on the
coefficients $\beta$ or $\gamma$. Due to the isotropy of the random
projection, a ridge-type penalty as in \citet{Lu:2013} and \citet{mcwilliams2014loco} is
perhaps a natural choice. An interesting summary of the bounds on
random projections is on the other hand that the random projection as
in~\eqref{eq:post2} already acts as a regularization  and the
theoretical properties of~\eqref{eq:post2} are very much related to
the properties of a ridge-type estimator of the coefficient vector in
the absence of random projections.   

We will restrict discussion of the properties mostly to the mean squared error (MSE) 
\begin{equation}
\label{mse}
\mathbb{E}_{\vec{\phi}}\big[\mathbb{E}_{\varepsilon}(\|\vec{X} \beta -\vec{X} \cls \|_2^2)\big].
\end{equation}
First results on compressed least squares have been given
in~\citep{clsr}  in a random design setting. It was shown that the
bias of the estimator~\eqref{eq:post2}  is of order $\mathcal{O}(\log(n)/d)$. This proof
used a modified version of the Johnson-Lindenstrauss Lemma.  A recent result~\citep{kaban:2014} shows that the $\log(n)$-term
is not necessary for fixed design settings where
$\vec{Y}=\vec{X}\beta+\varepsilon$ for some $\beta \in \mathbb{R}^p$
and $\varepsilon$ is i.i.d. noise, centered
$\mathbb{E}_{\varepsilon}[\varepsilon]=0$ and with the variance
$\mathbb{E}_{\varepsilon}[\varepsilon \varepsilon']=\sigma^2 I_{n
  \times n}$. We will work with this setting in the following. 

The following result of~\citep{kaban:2014} gives a bound on the MSE for
fixed design.
\begin{theorem}\label{clsmse}~\citep{kaban:2014} Assume fixed design and $\mathrm{Rank}(\vec{X}) \geq d$. Then 
\begin{equation} 
\mathbb{E}_{\vec{\phi}}\big[\mathbb{E}_{\varepsilon}(\|\vec{X} \beta -\vec{X} \cls \|_2^2)\big]\;\leq \;\sigma^2 d + \frac{\| \x \beta\|_2^2}{d}+\operatorname{trace}(\x^{\prime}\x)\frac{\| \beta \|_2^2}{d}.
\end{equation}
\end{theorem}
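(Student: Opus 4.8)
The plan is to exploit that the fitted vector $\x\cls$ is an orthogonal projection of $\vec{Y}$ onto a \emph{random} $d$-dimensional subspace, split the error into a noise part and a bias part, average the noise part over $\varepsilon$ and the bias part over $\pp$. Concretely, write $A=\x\pp\in\mathbb{R}^{n\times d}$ and note from \eqref{eq:post2} that $\x\cls = A(A'A)^{-1}A'\vec{Y}=P_A\vec{Y}$, the orthogonal projection onto the column space of $A$. The assumption $\mathrm{Rank}(\x)\ge d$ guarantees that $A$ has full column rank $d$ almost surely, so $P_A$ is a genuine rank-$d$ projector. Substituting $\vec{Y}=\x\beta+\varepsilon$ then yields the decomposition $\x\beta-\x\cls=(I-P_A)\x\beta-P_A\varepsilon$.

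I would first take the expectation over $\varepsilon$. Since $P_A$ depends only on $\pp$ and $\varepsilon$ is mean-zero with covariance $\sigma^2 I$, the cross term vanishes and $\mathbb{E}_{\varepsilon}\|P_A\varepsilon\|_2^2=\sigma^2\operatorname{trace}(P_A)=\sigma^2 d$. Thus the whole problem reduces to controlling the averaged bias $\mathbb{E}_{\pp}\big[\|(I-P_A)\x\beta\|_2^2\big]$ and then adding $\sigma^2 d$.

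The key step for the bias is that $P_A$ is, by construction, the projection minimizing $\|\x\beta-\x\pp w\|_2^2$ over $w\in\mathbb{R}^d$, so any fixed $w$ gives an upper bound. The natural choice is $w=\pp'\beta$: because $\pp$ has i.i.d.\ $\mathcal{N}(0,1/d)$ entries one has $\mathbb{E}_{\pp}[\pp\pp']=I_p$, so that $\x\pp\pp'\beta$ is an unbiased reconstruction of $\x\beta$. This gives $\|(I-P_A)\x\beta\|_2^2\le\|\x(I-\pp\pp')\beta\|_2^2$, and it remains to evaluate the expectation of the right-hand side exactly.

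The main obstacle is this last expectation, which expands into the fourth-moment form $\mathbb{E}_{\pp}\big[\beta'\pp\pp'\x'\x\pp\pp'\beta\big]$. I would compute it by Wick's theorem (Isserlis), writing everything in indices and summing the three Gaussian pairings of $\mathbb{E}[\phi_{ia}\phi_{ja}\phi_{lb}\phi_{mb}]=d^{-2}\big(\delta_{ij}\delta_{lm}+\delta_{il}\delta_{jm}\delta_{ab}+\delta_{im}\delta_{jl}\delta_{ab}\big)$. Summing over the projection indices $a,b$ produces one $\mathcal{O}(1)$ term, $\beta'\x'\x\beta$, which cancels against the lower-order cross terms coming from $\mathbb{E}[\pp\pp']=I$, together with two $\mathcal{O}(1/d)$ terms. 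The net result is the exact identity $\mathbb{E}_{\pp}\|\x(I-\pp\pp')\beta\|_2^2=\|\x\beta\|_2^2/d+\operatorname{trace}(\x'\x)\|\beta\|_2^2/d$. Adding the variance contribution $\sigma^2 d$ then gives the stated bound. The only delicate point is the bookkeeping of the Gaussian fourth moments; the remainder is routine linear algebra.
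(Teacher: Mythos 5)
Your proposal is correct and follows essentially the same route as the paper's proof: the same decomposition of the MSE into the exact variance term $\sigma^2 d$ (via $\operatorname{trace}$ of the rank-$d$ projection) plus a bias term, the same projection-optimality bound obtained by substituting the feasible point $\gamma=\pp'\beta$, and the same exact evaluation of $\mathbb{E}_{\pp}\|\x(I-\pp\pp')\beta\|_2^2$. The only difference is that you spell out, via Wick's theorem, the Gaussian fourth-moment computation that the paper dismisses as a ``lengthy but straightforward calculation,'' and your bookkeeping there is accurate.
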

\begin{proof}
See Appendix.
\end{proof}
Compared with~\citep{clsr}, the result removes an unnecessary
$\mathcal{O}(\log(n))$ term and demonstrates the $\mathcal{O}(1/d)$
behaviour of the bias. 
The result also illustrates the tradeoffs when choosing a suitable
dimension $d$ for the projection. Increasing $d$ will lead to a $1/d$
reduction in the bias terms but lead to a linear increase in the
estimation error (which is proportional to the dimension in which the
least-squares estimation is performed). An optimal bound can only be
achieved with a value of $d$ hat depends on the unknown signal and in
practice one would typically use cross-validation to make the choice
of the dimension of the projection.

One issue with the  bound in Theorem~\ref{clsmse} is that the bound on the bias term in the noiseless case ($Y=\x \beta$)
\begin{equation}
\mathbb{E}_{\vec{\phi}}\big[\mathbb{E}_{\varepsilon}(\|\vec{X} \beta -\vec{X} \cls \|_2^2)\big]\;\leq \frac{\| \x \beta\|_2^2}{d}+\operatorname{trace}(\x^{\prime}\x)\frac{\| \beta \|_2^2}{d}
\end{equation}
is usually weaker than the trivial bound (by setting $\cls = 0 $) of
\begin{equation}
\mathbb{E}_{\vec{\phi}}\big[\mathbb{E}_{\varepsilon}(\|\vec{X} \beta -\vec{X} \cls \|_2^2)\big]\;\leq \|\vec{X} \beta\|_2^2
\end{equation}
for most values of $d < p$. By improving the bound, it is also possible to point out the similarities between ridge regression and compressed least squares.

The improvement in the bound rests on a small modification in the original proof in~\citep{kaban:2014}. The idea is to bound the bias term of (\ref{mse}) by optimizing over the upper bound given in the foregoing theorem. Specifically, one can use the inequality 
\begin{align*}
\mathbb{E}_{\vec{\phi}}[\mathbb{E}_{\varepsilon}[\| \x \beta -\x\vec{\phi} (\vec{\phi}^{\prime}
\x^{\prime} \x \vec{\phi})^{-1} \vec{\phi}^{\prime} \x^{\prime} \x
\beta \|_2^2]]\;&\leq\; \min_{\hat{\beta} \in
  \mathbb{R}^p}\; \mathbb{E}_{\vec{\phi}}[\mathbb{E}_{\varepsilon}[\| \x \beta -\x\vec{\phi}\vec{\phi}^{\prime} \hat{\beta} \|_2^2]], \\
\textrm{instead of }\hspace{8cm}  \\
\mathbb{E}_{\vec{\phi}}[\mathbb{E}_{\varepsilon}[\| \x \beta -\x\vec{\phi} (\vec{\phi}^{\prime}
\x^{\prime} \x \vec{\phi})^{-1} \vec{\phi}^{\prime} \x^{\prime} \x
\beta \|_2^2]]\;&\leq\; \mathbb{E}_{\vec{\phi}}[\mathbb{E}_{\varepsilon}[\| \x \beta -\x\vec{\phi}\vec{\phi}^{\prime} \beta \|_2^2]]. 
\end{align*}
To simplify the exposition we will from now on always assume we have rotated the design matrix
to an orthogonal design so that the Gram matrix is diagonal:
\begin{equation}
\Sigma=\x^{\prime} \x=\textit{diag}(\lambda_1,...,\lambda_p).
\end{equation}
This can always be achieved for any design matrix and is thus not a restriction. It implies,
however, that the optimal regression coefficients $\beta$ are
expressed in the basis in which the Gram matrix is orthogonal, this is the basis of principal components. This will
turn out to be the natural choice for random projections and allows for easier interpretation
of the results. \\
Furthermore note that in Theorem 1 we have the assumption $\mathrm{Rank}(\vec{X}) \geq d$, which tells us that we can apply the CLSE in the high dimensional setting $p \gg n$ as long as we choose $d$ small enough (smaller than $\mathrm{Rank}(\vec{X})$, which is usually equal to $n$) in order to have uniqueness. \\
With the foregoing discussion on how to improve the bound in Theorem 1 we get the following theorem:
\begin{theorem}\label{clrmseimp} Assume $\mathrm{Rank}(\x) \geq d$, then the MSE (\ref{mse}) can be bounded above by
\begin{equation} \label{eq:varboundrandom}
\mathbb{E}_{\vec{\phi}}[\mathbb{E}_{\varepsilon}[\|\x \beta -\x \cls \|_2^2]] \leq \sigma^2 d +\sum_{i=1}^p \beta_i^2 \lambda_i w_i
\end{equation}
where 
\begin{equation}
w_i = \frac{(1+1/d)\lambda_i^2+(1+2/d)\lambda_i \operatorname{trace}(\Sigma)+\operatorname{trace}(\Sigma)^2/d}{(d+2+1/d)\lambda_i^2+2(1+1/d)\lambda_i \operatorname{trace}(\Sigma)+\operatorname{trace}(\Sigma)^2/d}.\\
\end{equation}
\end{theorem}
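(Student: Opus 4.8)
The plan is to reduce everything to two computations: a noise term handled by a trace identity, and a bias term handled by the improved inequality highlighted just before the theorem, followed by a single Gaussian fourth-moment calculation. Writing the estimator in closed form, $\x\cls = P_{\vec{\phi}}\vec{Y}$ with $P_{\vec{\phi}} = \x\vec{\phi}(\vec{\phi}'\x'\x\vec{\phi})^{-1}\vec{\phi}'\x'$, the hypothesis $\mathrm{Rank}(\x)\geq d$ ensures that $\vec{\phi}'\x'\x\vec{\phi}$ is almost surely invertible, so $P_{\vec{\phi}}$ is well defined and is the orthogonal projection onto the column space of $\x\vec{\phi}$; in particular it is symmetric, idempotent, and satisfies $\operatorname{trace}(P_{\vec{\phi}}) = d$.

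First I would split off the noise. Since $\x\cls = P_{\vec{\phi}}(\x\beta + \varepsilon)$ and $\varepsilon$ is centered with covariance $\sigma^2 I$ and independent of $\vec{\phi}$, the cross term vanishes under $\mathbb{E}_{\varepsilon}$ and
\begin{equation*}
\mathbb{E}_{\varepsilon}[\|\x\beta - \x\cls\|_2^2] = \|(I - P_{\vec{\phi}})\x\beta\|_2^2 + \sigma^2\operatorname{trace}(P_{\vec{\phi}}) = \|(I - P_{\vec{\phi}})\x\beta\|_2^2 + \sigma^2 d.
\end{equation*}
Taking $\mathbb{E}_{\vec{\phi}}$ isolates the term $\sigma^2 d$, and it remains to bound the bias $\mathbb{E}_{\vec{\phi}}[\|(I - P_{\vec{\phi}})\x\beta\|_2^2]$.

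Next I would exploit that $P_{\vec{\phi}}$ is the orthogonal projection onto $\mathrm{range}(\x\vec{\phi})$: for every realization of $\vec{\phi}$ and every deterministic $\hat{\beta}\in\mathbb{R}^p$ the vector $\x\vec{\phi}\vec{\phi}'\hat{\beta}$ lies in that range, so $\|(I - P_{\vec{\phi}})\x\beta\|_2^2 \leq \|\x\beta - \x\vec{\phi}\vec{\phi}'\hat{\beta}\|_2^2$. Taking expectations and then minimizing over $\hat{\beta}$ yields the improved inequality. Expanding the right-hand side with $M = \vec{\phi}\vec{\phi}'$ gives
\begin{equation*}
\mathbb{E}_{\vec{\phi}}[\|\x\beta - \x M\hat{\beta}\|_2^2] = \beta'\Sigma\beta - 2\hat{\beta}'\mathbb{E}[M]\Sigma\beta + \hat{\beta}'\mathbb{E}[M\Sigma M]\hat{\beta},
\end{equation*}
so I need the first and second moments of $M$. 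One checks $\mathbb{E}[M] = I$ directly, and the crux is the fourth moment $\mathbb{E}[\vec{\phi}\vec{\phi}'\Sigma\vec{\phi}\vec{\phi}']$. Working in the principal-component basis where $\Sigma = \operatorname{diag}(\lambda_1,\dots,\lambda_p)$ and applying Wick's theorem to the i.i.d. $\mathcal{N}(0,1/d)$ entries, the three pairings of the four Gaussian factors, summed over the $d$ columns, collapse to a diagonal matrix with entries $a_i = (1 + 1/d)\lambda_i + \operatorname{trace}(\Sigma)/d$. This fourth-moment evaluation is the step I expect to be the main obstacle, since it requires careful bookkeeping over the index contractions and the summation over columns.

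Finally, because $\mathbb{E}[M\Sigma M]$ is diagonal the objective decouples coordinate-wise into $\sum_i(\lambda_i\beta_i^2 - 2\lambda_i\beta_i\hat{\beta}_i + a_i\hat{\beta}_i^2)$, each term minimized at $\hat{\beta}_i = \lambda_i\beta_i/a_i$ with optimal value $\beta_i^2\lambda_i(a_i - \lambda_i)/a_i$. Substituting $a_i - \lambda_i = (\lambda_i + \operatorname{trace}(\Sigma))/d$ gives the compact weight $w_i = (\lambda_i + \operatorname{trace}(\Sigma))/((d+1)\lambda_i + \operatorname{trace}(\Sigma))$; multiplying numerator and denominator by $a_i$ reproduces the expanded expression stated in the theorem, and adding back $\sigma^2 d$ completes the bound.
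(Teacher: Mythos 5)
Your proposal is correct and follows essentially the same route as the paper's proof: decompose the MSE into the noise term $\sigma^2 d$ via $\operatorname{trace}(P_{\vec{\phi}})=d$, relax the bias using $\|(I-P_{\vec{\phi}})\x\beta\|_2^2 \leq \|\x\beta-\x\vec{\phi}\vec{\phi}'\hat{\beta}\|_2^2$, compute $\mathbb{E}[\vec{\phi}\vec{\phi}'\Sigma\vec{\phi}\vec{\phi}'] = \operatorname{diag}\bigl((1+1/d)\lambda_i+\operatorname{trace}(\Sigma)/d\bigr)$, and minimize coordinate-wise, exactly as in the appendix. Your Wick-theorem evaluation of the fourth moment (which the paper leaves implicit as a ``lengthy but straightforward calculation'') and the observation that $w_i$ simplifies to $(\lambda_i+\operatorname{trace}(\Sigma))/((d+1)\lambda_i+\operatorname{trace}(\Sigma))$ are both accurate and are welcome elaborations rather than a genuinely different argument.
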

\begin{proof}
See Appendix.
\end{proof}
The $w_i$ are shrinkage factors. By defining the proportion of the total variance observed in the direction of the $i$-th principal component as
\begin{equation}
\alpha_i=\frac{\lambda_i}{\operatorname{trace}(\Sigma)},
\end{equation}
we can rewrite the shrinkage factors in the foregoing theorem as
\begin{equation}
w_i=\frac{(1+1/d)\alpha_i^2+(1+2/d)\alpha_i +1/d}{(d+2+1/d)\alpha_i^2+2(1+1/d)\alpha_i +1/d}.
\end{equation}
Analyzing this term shows that the shrinkage is stronger in directions of high variance compared to directions of low variance. To explain this relation in a bit more detail we compare it to ridge regression. The MSE of ridge regression with penalty term $\lambda \|\beta\|_2^2$ is given by
\begin{equation}\label{eq:varboundridge}
\mathbb{E}_{\varepsilon} [\|\x \beta-\x \beta^{Ridge}\|_2^2]= \sigma^2 \sum_{i=1}^p \Big( \frac{\lambda_i}{\lambda_i+\lambda}\Big)^2+\sum_{i=1}^p \beta_i^2 \lambda_i \Big( \frac{\lambda}{\lambda+\lambda_i} \Big)^2. 
\end{equation}
Imagine that the signal lives on the space spanned by the first $q$ principal directions, that is $\beta_i=0$ for $i>q$. The best MSE we could then achieve is $\sigma^2 q$ by running a regression on the first $q$ first principal directions. For random projections, we can see that we can indeed reduce the bias term to nearly zero by forcing $w_i\approx 0$ for $i=1,\ldots,q$. This requires $d\gg q$ as the bias factors will then vanish like $1/d$.
Ridge regression on the other hand requires that the penalty $\lambda$ is smaller than the $q$-th largest eigenvalue $\lambda_q$ (to reduce the bias on the first $q$ directions) but large enough to render the variance factor $\lambda_i/(\lambda_i + \lambda) $ very small for $i>q$. The tradeoff in choosing the penalty $\lambda$ in ridge regression and choosing the dimension $d$ for random projections is thus very similar. The number of directions for which the eigenvalue $\lambda_i$ is larger than the penalty $\lambda$ in ridge corresponds to the effective dimension and will yield the same variance bound as in random projections. 
 The analogy between the MSE bounds~\eqref{eq:varboundrandom} for random projections and~\eqref{eq:varboundridge} for ridge regression illustrates thus a close relationship between compressed least squares and ridge regression or principal component regression, similar to~\citet{Dhillon:2013tw}. 

Instead of an upper bound for the MSE  of CLSE as in  \citet{clsr} and \citet{kaban:2014}, we will in the following try to derive 
explicit expressions for the MSE, following the ideas in \citet{kaban:2014} and \citet{marzetta:2013} and we give a closed form MSE in the case of orthonormal predictors. The derivation will make use of the following notation:
\begin{definition}
Let $\vec{\phi} \in \mathbb{R}^{p \times d}$ be a random projection. We define the following matrices:
\begin{align*}
\vec{\phi}_d^{\x}=&\vec{\phi} (\vec{\phi}^{\prime} \x^{\prime} \x \vec{\phi} )^{-1} \vec{\phi}^{\prime} \in \mathbb{R}^{p \times p} \quad 
\textrm{and}\quad 
T_d^{\vec{\phi}}=\mathbb{E}_{\vec{\phi}}[\vec{\phi}_d^{\x}]=\mathbb{E}_{\vec{\phi}}[\vec{\phi} (\vec{\phi}^{\prime} \x^{\prime} \x \vec{\phi} )^{-1} \vec{\phi}^{\prime}] \in \mathbb{R}^{p \times p}.
\end{align*}
\end{definition}
The next Lemma~\citep{marzetta:2013} summarizes the main properties of $\vec{\phi}_d^{\x}$ and $T_d^{\vec{\phi}}$.
\begin{lemma}
Let $\vec{\phi} \in \mathbb{R}^{p \times d}$ be a random projection. Then 
\begin{itemize}
\item[$i)$] $\,\,\,(\vec{\phi}_d^{\x})^{\prime}=\vec{\phi}_d^{\x}$ (symmetric),
\item[$ii)$] $\,\,\,\vec{\phi}_d^{\x} \x^{\prime} \x \vec{\phi}_d^{\x} = \vec{\phi}_d^{\x}$ (projection),
\item[$iii)$] $\,\,$ if $\Sigma=\x^{\prime}\x$ is diagonal $\Rightarrow$ $T_d^{\vec{\phi}}$ is diagonal.
\end{itemize}
\end{lemma}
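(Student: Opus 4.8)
The plan is to treat the three claims separately, since (i) and (ii) are purely algebraic consequences of the form of $\vec{\phi}_d^{\x}$ whereas (iii) hinges on a distributional symmetry of the random projection. Throughout I abbreviate $M=\vec{\phi}^{\prime}\x^{\prime}\x\vec{\phi}\in\mathbb{R}^{d\times d}$, which is invertible by the standing assumption $\mathrm{Rank}(\x)\geq d$, so that $\vec{\phi}_d^{\x}=\vec{\phi}M^{-1}\vec{\phi}^{\prime}$.

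For (i) I would simply transpose. Since $\x^{\prime}\x$ is symmetric, $M$ is symmetric, hence so is $M^{-1}$, and $(\vec{\phi}M^{-1}\vec{\phi}^{\prime})^{\prime}=\vec{\phi}(M^{-1})^{\prime}\vec{\phi}^{\prime}=\vec{\phi}M^{-1}\vec{\phi}^{\prime}$. For (ii) I would substitute the definition twice and recognize the middle factor: $\vec{\phi}_d^{\x}\x^{\prime}\x\vec{\phi}_d^{\x}=\vec{\phi}M^{-1}(\vec{\phi}^{\prime}\x^{\prime}\x\vec{\phi})M^{-1}\vec{\phi}^{\prime}=\vec{\phi}M^{-1}MM^{-1}\vec{\phi}^{\prime}=\vec{\phi}M^{-1}\vec{\phi}^{\prime}=\vec{\phi}_d^{\x}$. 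Neither step uses any property of the distribution of $\vec{\phi}$.

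The real content is (iii), and the key idea is a sign-flip invariance argument. Let $D=\mathrm{diag}(s_1,\dots,s_p)$ with each $s_k\in\{-1,+1\}$. Because $\Sigma=\x^{\prime}\x$ is diagonal, it is invariant under conjugation, $D\Sigma D=\Sigma$. I would then track what happens when $\vec{\phi}$ is replaced by $D\vec{\phi}$ inside the definition: using $D^{\prime}=D$ and $D\x^{\prime}\x D=\Sigma=\x^{\prime}\x$, the middle quadratic form is unchanged and the estimator transforms as $D\vec{\phi}_d^{\x}D$. The probabilistic input is that $\vec{\phi}$ has i.i.d. $\mathcal{N}(0,1/d)$ entries, whose law is sign-symmetric, so $D\vec{\phi}\overset{d}{=}\vec{\phi}$. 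Taking expectations of the deterministic identity and using this distributional equality gives $T_d^{\vec{\phi}}=\mathbb{E}[D\vec{\phi}_d^{\x}D]=DT_d^{\vec{\phi}}D$ for every sign pattern $D$. Reading off the $(i,j)$ entry yields $(T_d^{\vec{\phi}})_{ij}=s_is_j(T_d^{\vec{\phi}})_{ij}$, and choosing $s_i=-s_j$ for any $i\neq j$ forces the off-diagonal entries to vanish, so $T_d^{\vec{\phi}}$ is diagonal.

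The main obstacle, and the only place needing care, is isolating the two facts that drive (iii): diagonality of $\Sigma$ is exactly what gives $D\Sigma D=\Sigma$ and hence preserves the middle factor $M$, while the i.i.d. Gaussian law makes $\vec{\phi}$ invariant under the induced left multiplication by $D$. Everything then reduces to matching the deterministic transformation $\vec{\phi}_d^{\x}\mapsto D\vec{\phi}_d^{\x}D$ against $D\vec{\phi}\overset{d}{=}\vec{\phi}$. I would close with the remark that (i) and (ii) together show $\x\vec{\phi}_d^{\x}\x^{\prime}$ is the orthogonal projector onto the column space of the compressed design $\x\vec{\phi}$, which is the geometric fact underlying the subsequent MSE computations.
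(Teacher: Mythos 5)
Your proof is correct, and it is worth noting that the paper itself contains no proof of this lemma at all --- it simply defers to \citet{marzetta:2013} --- so your write-up supplies the argument the paper omits, and it does so by the standard route used in that reference. Parts (i) and (ii) are the routine algebra: with $M=\vec{\phi}^{\prime}\x^{\prime}\x\vec{\phi}$ symmetric and invertible, transposition and the cancellation $M^{-1}MM^{-1}=M^{-1}$ give both claims, and you correctly flag that no distributional property enters here. For (iii), your sign-flip invariance argument is exactly right and isolates the two essential ingredients: diagonality of $\Sigma$ is what makes the middle factor invariant, since $(D\vec{\phi})^{\prime}\Sigma(D\vec{\phi})=\vec{\phi}^{\prime}D\Sigma D\vec{\phi}=\vec{\phi}^{\prime}\Sigma\vec{\phi}$, so that $\vec{\phi}\mapsto D\vec{\phi}$ transforms the estimator as $\vec{\phi}_d^{\x}\mapsto D\vec{\phi}_d^{\x}D$; and sign-symmetry of the i.i.d.\ Gaussian law gives $D\vec{\phi}\overset{d}{=}\vec{\phi}$, whence $T_d^{\vec{\phi}}=DT_d^{\vec{\phi}}D$ for every sign pattern and the off-diagonal entries vanish. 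The only point I would make explicit is that $M^{-1}$ exists only almost surely (Gaussian $\vec{\phi}$ together with $\mathrm{Rank}(\x)\geq d$ guarantees this on a full-measure event), so the identity $\vec{\phi}_d^{\x}(D\vec{\phi})=D\,\vec{\phi}_d^{\x}(\vec{\phi})\,D$ and the expectation defining $T_d^{\vec{\phi}}$ should be read on that event; this is harmless but deserves a sentence. Your closing remark that (i) and (ii) identify $\x\vec{\phi}_d^{\x}\x^{\prime}$ as the orthogonal projector onto the column space of $\x\vec{\phi}$ is also correct and is indeed the geometric fact driving the trace computation $\operatorname{trace}(\Sigma\vec{\phi}_d^{\x})=d$ in Theorem~\ref{exactmse}.
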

\begin{proof}
See~\citet{marzetta:2013}.
\end{proof}
The important point of this lemma is that when we assume orthogonal design then $T_d^{\vec{\phi}}$ is diagonal. We will denote this by
\begin{equation*}
T_d^{\vec{\phi}}=\mathrm{diag}(1/\eta_1,...,1/\eta_p),
\end{equation*}
where the terms $\eta_i$ are well defined but without an explicit representation.

\begin{figure}[t]
\centerline{\mbox{\subfigure{\includegraphics[width=6cm]{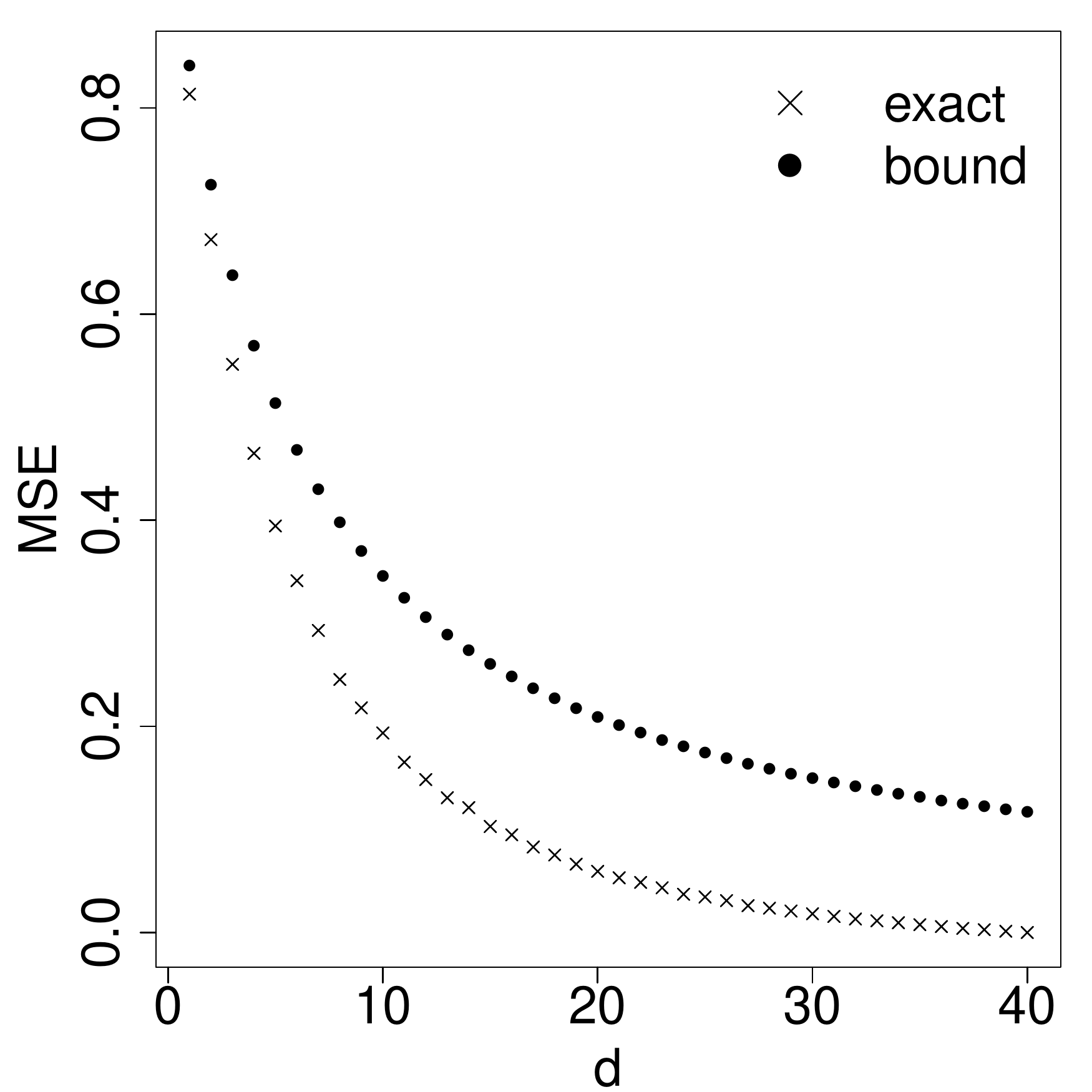}}\quad
\subfigure{\includegraphics[width=6cm]{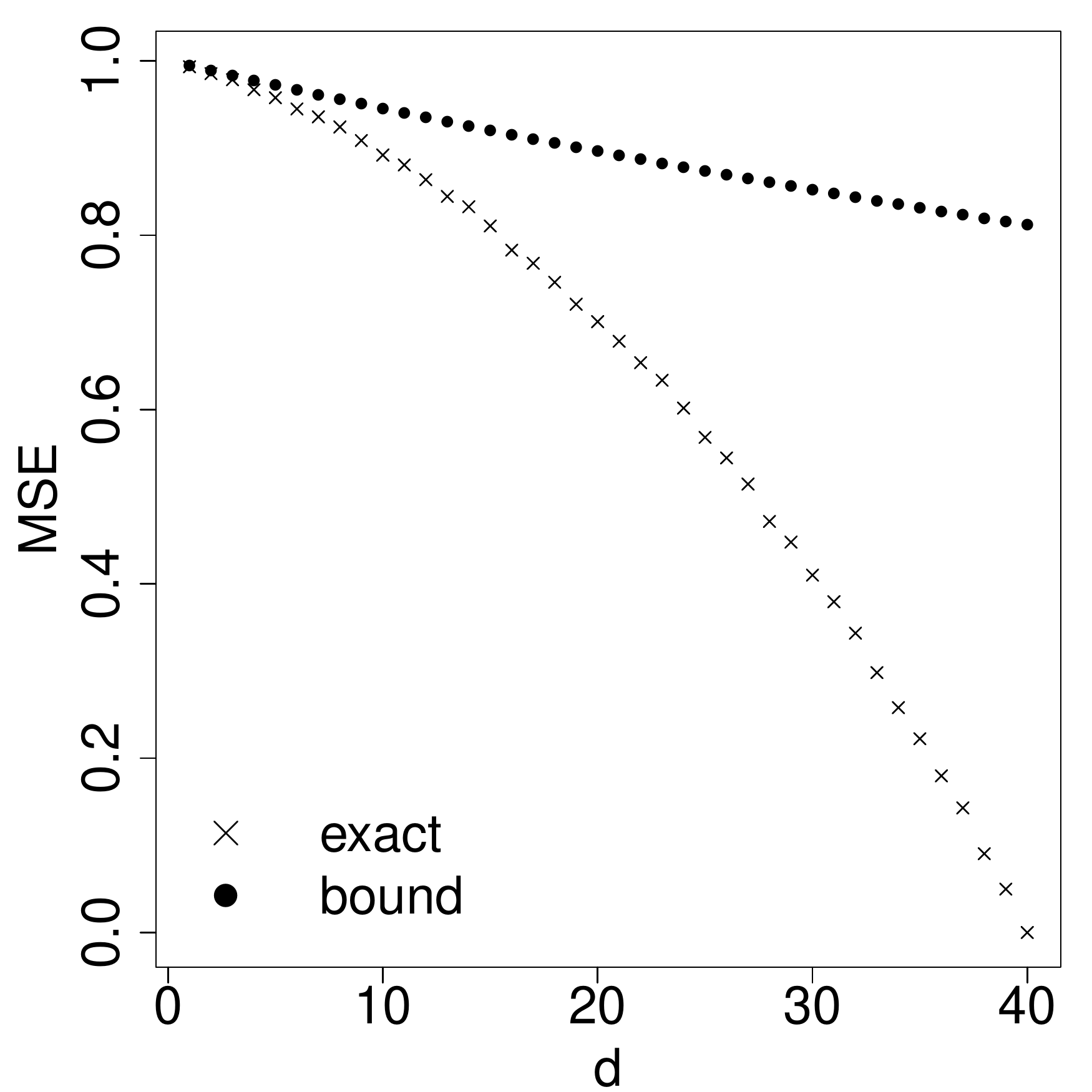} }}}
\caption{Numerical simulations of the bounds in Theorems 2 and 3. Left: the exact factor $(1-\lambda_1/\eta_1)$ in the MSE  is plotted versus the bound $w_1$ as a function of the projection dimension $d$. Right:  the exact factor $(1-\lambda_p/\eta_p)$ in the MSE and the upper bound $w_p$. Note that
  the upper bound works especially well for small values of $d$ and for the larger eigenvalues and
 is always below the trivial bound $1$.} \label{fig1}
\end{figure}

A quick calculation reveals the following theorem:
\begin{theorem}\label{exactmse}
Assume $\mathrm{Rank}(\x) \geq d$, then the MSE (\ref{mse}) equals
\begin{equation}
\mathbb{E}_{\vec{\phi}}[\mathbb{E}_{\varepsilon}[\|\x \beta -\x \cls \|_2^2]]=\sigma^2 d+\sum_{i=1}^p \beta_i^2 \lambda_i \Big( 1-\frac{\lambda_i}{\eta_i} \Big).
\end{equation}
Furthermore we have 
\begin{equation}
\label{sumvar}
\sum_{i=1}^p \frac{\lambda_i}{\eta_i}=d.
\end{equation}
\end{theorem}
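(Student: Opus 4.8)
The plan is to compute the MSE exactly by substituting $\cls=\vec{\phi}_d^{\x}\x^{\prime}\vec{Y}$ with $\vec{Y}=\x\beta+\varepsilon$ and then exploiting the two structural properties of $\vec{\phi}_d^{\x}$ from the Lemma together with the diagonality of $\Sigma$. Writing the prediction error as $\x\beta-\x\cls=(\x\beta-\x\vec{\phi}_d^{\x}\Sigma\beta)-\x\vec{\phi}_d^{\x}\x^{\prime}\varepsilon$, I would first take the expectation over $\varepsilon$. Because the noise is mean-zero, the cross term vanishes and the MSE splits cleanly into a deterministic bias part $\|\x\beta-\x\vec{\phi}_d^{\x}\Sigma\beta\|_2^2$ and a noise part $\mathbb{E}_{\varepsilon}[\|\x\vec{\phi}_d^{\x}\x^{\prime}\varepsilon\|_2^2]$.

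For the noise term I would use $\mathbb{E}_{\varepsilon}[\varepsilon\varepsilon^{\prime}]=\sigma^2 I$ to rewrite it as $\sigma^2\operatorname{trace}(\x\vec{\phi}_d^{\x}\x^{\prime}\x\vec{\phi}_d^{\x}\x^{\prime})$ and then collapse the middle factor with the projection property $\vec{\phi}_d^{\x}\x^{\prime}\x\vec{\phi}_d^{\x}=\vec{\phi}_d^{\x}$. The cyclic trace identity then yields $\operatorname{trace}(\x\vec{\phi}_d^{\x}\x^{\prime})=\operatorname{trace}((\vec{\phi}^{\prime}\x^{\prime}\x\vec{\phi})^{-1}\vec{\phi}^{\prime}\x^{\prime}\x\vec{\phi})=d$. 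The crucial observation is that this equals $d$ for \emph{every} realization of $\vec{\phi}$, so the outer $\mathbb{E}_{\vec{\phi}}$ is trivial and the noise contribution is exactly $\sigma^2 d$.

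For the bias term I would expand the quadratic form $(\beta-\vec{\phi}_d^{\x}\Sigma\beta)^{\prime}\Sigma(\beta-\vec{\phi}_d^{\x}\Sigma\beta)$ and invoke symmetry (property $i$) together with the projection property again to reduce the term quadratic in $\vec{\phi}_d^{\x}$ to one linear in $\vec{\phi}_d^{\x}$; two of the three resulting terms then combine, leaving $\beta^{\prime}\Sigma\beta-\beta^{\prime}\Sigma\vec{\phi}_d^{\x}\Sigma\beta$. Taking $\mathbb{E}_{\vec{\phi}}$ replaces $\vec{\phi}_d^{\x}$ by $T_d^{\vec{\phi}}=\operatorname{diag}(1/\eta_1,\ldots,1/\eta_p)$, and since both $\Sigma$ and $T_d^{\vec{\phi}}$ are diagonal under the orthogonal design (property $iii$), the expression becomes $\sum_i\beta_i^2\lambda_i-\sum_i\beta_i^2\lambda_i^2/\eta_i=\sum_i\beta_i^2\lambda_i(1-\lambda_i/\eta_i)$, which is the claimed bias. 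For the identity $\sum_i\lambda_i/\eta_i=d$, I would note that $\operatorname{trace}(\Sigma T_d^{\vec{\phi}})=\sum_i\lambda_i/\eta_i$ by diagonality, while on the other hand $\operatorname{trace}(\Sigma T_d^{\vec{\phi}})=\mathbb{E}_{\vec{\phi}}[\operatorname{trace}(\Sigma\vec{\phi}_d^{\x})]=d$ by exactly the deterministic cyclic-trace computation already used for the noise term.

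Since every step reduces to an application of the Lemma plus elementary trace manipulations, I do not expect a genuine obstacle; the only points requiring care are the repeated, correct use of the idempotency property $\vec{\phi}_d^{\x}\Sigma\vec{\phi}_d^{\x}=\vec{\phi}_d^{\x}$ to collapse the quadratic expressions, and the realization that one never needs a closed form for the $\eta_i$ — both the $\sigma^2 d$ variance and the normalization $\sum_i\lambda_i/\eta_i=d$ follow from the single fact that $\operatorname{trace}(\Sigma\vec{\phi}_d^{\x})=d$ holds for each fixed $\vec{\phi}$.
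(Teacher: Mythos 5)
Your proposal is correct and follows essentially the same route as the paper's proof: both expand the quadratic using the symmetry and idempotence of $\vec{\phi}_d^{\x}$, invoke the diagonality of $T_d^{\vec{\phi}}$ to obtain the bias $\sum_{i=1}^p \beta_i^2\lambda_i(1-\lambda_i/\eta_i)$, and extract both the $\sigma^2 d$ noise term and the identity $\sum_{i=1}^p \lambda_i/\eta_i=d$ from the two evaluations $\operatorname{trace}(\Sigma\vec{\phi}_d^{\x})=d$ (pathwise, by cyclicity) versus $\operatorname{trace}(\Sigma T_d^{\vec{\phi}})=\sum_{i=1}^p \lambda_i/\eta_i$ (in expectation). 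The only cosmetic difference is that you split bias and noise by taking $\mathbb{E}_{\varepsilon}$ first, whereas the paper expands the full quadratic in $\vec{Y}$ before separating the terms.
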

\begin{proof}
See Appendix.
\end{proof}
By comparing coefficients in Theorems 2 and 3, we obtain the following corollary.
\begin{corollary}
Assume $\mathrm{Rank}(\x) \geq d$, then
\begin{equation}
\forall i \in \{1,...,p\}: \,\,\, 1-\frac{\lambda_i}{\eta_i} \leq w_i
\end{equation}
\end{corollary}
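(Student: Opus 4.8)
The plan is to exploit the fact that Theorem~\ref{clrmseimp} and Theorem~\ref{exactmse} describe the \emph{same} quantity, the MSE in~\eqref{mse}, for an \emph{arbitrary} signal $\beta \in \mathbb{R}^p$: the former as an upper bound, the latter as an exact identity. Since the exact value can never exceed the upper bound, for every $\beta$ we must have
\begin{equation*}
\sigma^2 d + \sum_{i=1}^p \beta_i^2 \lambda_i\Big(1-\frac{\lambda_i}{\eta_i}\Big) \;\leq\; \sigma^2 d + \sum_{i=1}^p \beta_i^2 \lambda_i\, w_i.
\end{equation*}
The noise terms $\sigma^2 d$ are identical and cancel, so after rearranging I would obtain the key inequality
\begin{equation*}
\sum_{i=1}^p \beta_i^2\, \lambda_i\Big(w_i - \big(1-\tfrac{\lambda_i}{\eta_i}\big)\Big) \;\geq\; 0 \qquad \text{for all } \beta \in \mathbb{R}^p .
\end{equation*}

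The crucial structural observation is that this is a \emph{diagonal} (separable) quadratic form in the entries of $\beta$: there are no cross terms $\beta_i\beta_j$ with $i\neq j$, because the design has been rotated so that $\Sigma=\x^{\prime}\x$ and hence $T_d^{\vec{\phi}}$ are both diagonal. A diagonal quadratic form $\sum_i c_i \beta_i^2$ is nonnegative for every $\beta$ if and only if each coefficient $c_i$ is nonnegative. Concretely, I would isolate the $i$-th coefficient by choosing $\beta$ to be the $i$-th standard basis vector, which collapses the sum to its single $i$-th term and yields
\begin{equation*}
\lambda_i\Big(w_i - \big(1-\tfrac{\lambda_i}{\eta_i}\big)\Big) \;\geq\; 0 .
\end{equation*}
For every index $i$ with $\lambda_i > 0$ I can then divide by $\lambda_i$ to conclude $1 - \lambda_i/\eta_i \leq w_i$, which is exactly the claim.

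The only point requiring a little care --- and the nearest thing to an obstacle --- is the degenerate case $\lambda_i = 0$, for which dividing by $\lambda_i$ is not allowed. Here I would instead verify the inequality directly: setting $\alpha_i = \lambda_i/\operatorname{trace}(\Sigma) = 0$ in the closed form for $w_i$ gives $w_i = (1/d)/(1/d) = 1$, while $1 - \lambda_i/\eta_i = 1$ as well, since the product $\lambda_i/\eta_i$ vanishes (the factor $1/\eta_i$, a diagonal entry of $T_d^{\vec{\phi}}$, is finite). Thus the two sides coincide and the inequality holds with equality. Beyond this edge case, one should only confirm that both theorems are stated for the identical MSE functional and that the shared rank assumption $\mathrm{Rank}(\x)\geq d$ guarantees the almost-sure invertibility of $\vec{\phi}^{\prime}\x^{\prime}\x\vec{\phi}$ needed for $\eta_i$ to be well defined; with that in place the comparison of coefficients is immediate and no further estimation is required.
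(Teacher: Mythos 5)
Your proof is correct and is essentially the paper's own argument: the paper justifies this corollary with nothing more than the phrase ``by comparing coefficients in Theorems 2 and 3,'' and your derivation---setting the exact MSE of Theorem~\ref{exactmse} against the upper bound of Theorem~\ref{clrmseimp} for arbitrary $\beta$, cancelling the common $\sigma^2 d$ term, and plugging in standard basis vectors to isolate each coefficient---is precisely that comparison made rigorous. Your explicit check of the degenerate case $\lambda_i=0$ (where both sides equal $1$) is a small point the paper silently glosses over, but it does not constitute a different approach.
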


As already mentioned in general we cannot give a closed-form expression for the terms $\eta_i$ in general. However, for some special cases (\ref{sumvar}) can help us to get to an exact form of the MSE of CLSE. If we assume orthonormal design ($\Sigma=C I_{p \times p}$) then we have that 
$\lambda_i/\eta_i$ is a constant for all $i$ and 
and thus, by (\ref{sumvar}), we have $\eta_i=C p /d$. This gives
\begin{equation}
\mathbb{E}_{\vec{\phi}}[\mathbb{E}_{\varepsilon}[\|\x \beta -\x \cls \|_2^2]]=\sigma^2 d+C\sum_{i=1}^p \beta_i^2  \Big( 1-\frac{d}{p}\Big),
\end{equation}
and thus we end up with a closed form MSE for this special case. 

Providing the exact mean-squared errors allows us to quantify the conservativeness of the upper bounds.
The upper bound has been shown to give a good approximation for small dimensions $d$ of the projection and for the signal contained in the larger eigenvalues. 

\section{Averaged Compressed Least Squares}
\label{sec:3}

We have  so far looked only into compressed least squares estimator with one single random projection. An issue in practice of the compressed least squares estimator is its variance due to the random projection as an additional source of randomness. This variance can be reduced by averaging multiple compressed least squares estimates coming from different random projections.
In this section we will show some properties of the averaged compressed least squares (ACLSE) estimator and discuss its advantage over the CLSE.
\begin{definition} (ACLSE) Let $\{\vec{\phi}_1,...,\vec{\phi}_K\} \, \in \mathbb{R}^{p \times d}$ be independent random projections, and let $\hat{\beta}_d^{\vec{\phi}_i} $ for all $ i \in \{1,...,K\}$ be the respective compressed least squares estimators. We define the averaged compressed least squares estimator (ACLSE) as
\begin{equation}
\hat{\beta}_d^K \; := \frac{1}{K}\sum_{i=1}^K \hat{\beta}_d^{\vec{\phi}_i}.
\end{equation}
\end{definition}
One major advantage of this estimator is that it can be calculated in parallel with the minimal number of two communications, one to send the data and one to receive the result. This means that the asymptotic computational cost of $\hat{\beta}_d^K$ is equal to the cost of $\cls$ if calculations are done on $K$ different processors.
To investigate the MSE of $\hat{\beta}_d^K$, we restrict ourselves for simplicity  to the limit case 
\begin{equation}
\hat{\beta}_d=\lim_{K \rightarrow \infty} \hat{\beta}_d^K
\end{equation}
and instead only investigate $\hat{\beta}_d$. The reasoning being that for large enough values of $K$ (say $K>100$) the behaviour of $\hat{\beta}_d$ is very similar to $\hat{\beta}_d^K$. The exact form of the MSE in terms of the $\eta_i$'s is given in ~\citet{kaban:2014}. Here we build on these results and give an explicit upper bound for the MSE.
\begin{theorem} \label{aclsemse}
Assume $\mathrm{Rank}(\x) \geq d$. Define 
\begin{equation*}
\tau=\sum_{i=1}^p \Big(\frac{\lambda_i}{\eta_i} \Big)^2 .
\end{equation*}
The MSE of $\hat{\beta}_d$ can be bounded from above by
\begin{equation*}
\mathbb{E}_{\vec{\phi}}[\mathbb{E}_{\varepsilon}[\|\x \beta-\x\hat{\beta}_d \|_2^2]]\leq\sigma^2  \tau+\sum_{i=1}^p \beta_i^2 \lambda_i w_i^2,
\end{equation*}
where the $w_i$'s are given (as in Theorem \ref{clsmse}) by
\begin{equation*}
w_i = \frac{(1+1/d)\lambda_i^2+(1+2/d)\lambda_i \operatorname{trace}(\Sigma)+\operatorname{trace}(\Sigma)^2/d}{(d+2+1/d)\lambda_i^2+2(1+1/d)\lambda_i \operatorname{trace}(\Sigma)+\operatorname{trace}(\Sigma)^2/d}.
\end{equation*}
and
\begin{equation*}
\tau \in [d^2/p,d].
\end{equation*}
\end{theorem}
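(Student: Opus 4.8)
The plan is to first identify the limiting estimator explicitly. Since the projections $\vec{\phi}_1,\ldots,\vec{\phi}_K$ are i.i.d. and each CLSE can be written as $\hat{\beta}_d^{\vec{\phi}_i}=\vec{\phi}_d^{\x,i}\x'\vec{Y}$ with $\vec{\phi}_d^{\x,i}=\vec{\phi}_i(\vec{\phi}_i'\x'\x\vec{\phi}_i)^{-1}\vec{\phi}_i'$, the strong law of large numbers gives
\begin{equation*}
\hat{\beta}_d=\lim_{K\to\infty}\frac{1}{K}\sum_{i=1}^K\hat{\beta}_d^{\vec{\phi}_i}=\mathbb{E}_{\vec{\phi}}[\vec{\phi}_d^{\x}]\,\x'\vec{Y}=T_d^{\vec{\phi}}\x'\vec{Y}.
\end{equation*}
Crucially, $T_d^{\vec{\phi}}$ is deterministic, so $\hat{\beta}_d$ is an ordinary linear estimator of $\vec{Y}$ and the outer expectation $\mathbb{E}_{\vec{\phi}}$ in the MSE becomes trivial; only the noise expectation $\mathbb{E}_{\varepsilon}$ survives.

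Next I would carry out the usual bias--variance split. Writing $\vec{Y}=\x\beta+\varepsilon$, we have $\x\hat{\beta}_d=\x T_d^{\vec{\phi}}\Sigma\beta+\x T_d^{\vec{\phi}}\x'\varepsilon$, and since the cross term vanishes under $\mathbb{E}_\varepsilon$,
\begin{equation*}
\mathbb{E}_\varepsilon[\|\x\beta-\x\hat{\beta}_d\|_2^2]=\|\x(I-T_d^{\vec{\phi}}\Sigma)\beta\|_2^2+\sigma^2\operatorname{trace}(T_d^{\vec{\phi}}\Sigma T_d^{\vec{\phi}}\Sigma).
\end{equation*}
Here I would invoke the orthogonal-design assumption: $\Sigma=\mathrm{diag}(\lambda_1,\ldots,\lambda_p)$ and, by Lemma~(iii), $T_d^{\vec{\phi}}=\mathrm{diag}(1/\eta_1,\ldots,1/\eta_p)$ are simultaneously diagonal. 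The products then collapse entrywise, yielding the exact expression $\sigma^2\tau+\sum_{i=1}^p\beta_i^2\lambda_i(1-\lambda_i/\eta_i)^2$ with $\tau=\sum_i(\lambda_i/\eta_i)^2$; this already matches the variance term $\sigma^2\tau$ in the statement.

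It then remains to bound the bias term and to locate $\tau$. For the bias I would use the Corollary, which gives $1-\lambda_i/\eta_i\le w_i$. To square this inequality I first need $1-\lambda_i/\eta_i\ge 0$, equivalently $\lambda_i/\eta_i\in[0,1]$; this follows because the projection property (Lemma~(ii)) makes $\Sigma^{1/2}\vec{\phi}_d^{\x}\Sigma^{1/2}$ idempotent, so its diagonal entries, and hence those of its expectation $\Sigma^{1/2}T_d^{\vec{\phi}}\Sigma^{1/2}$, lie in $[0,1]$. With $0\le 1-\lambda_i/\eta_i\le w_i$ we obtain $(1-\lambda_i/\eta_i)^2\le w_i^2$ termwise, which upgrades the exact MSE to the claimed upper bound.

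Finally, for the range of $\tau$ I would set $a_i=\lambda_i/\eta_i\in[0,1]$ and use the identity $\sum_i a_i=d$ from~\eqref{sumvar}. The upper bound is immediate from $a_i^2\le a_i$, giving $\tau=\sum_i a_i^2\le\sum_i a_i=d$; the lower bound is Cauchy--Schwarz (or the power-mean inequality), $\tau=\sum_i a_i^2\ge(\sum_i a_i)^2/p=d^2/p$. I expect the only genuinely delicate points to be the justification of the limit--expectation interchange (integrability of $\vec{\phi}_d^{\x}$ under $\mathrm{Rank}(\x)\ge d$) and of $\lambda_i/\eta_i\in[0,1]$; once the estimator is recognized as the deterministic smoother $T_d^{\vec{\phi}}\x'\vec{Y}$, everything else reduces to diagonal bookkeeping and two elementary inequalities.
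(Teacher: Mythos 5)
Your proof is correct, and it follows the same skeleton as the paper's --- exact MSE identity, then squaring the Corollary's inequality for the bias, then elementary bounds on $\tau$ --- but it differs in three substantive ways, each of which adds something. First, where the paper simply cites \citet{kaban:2014} for the exact identity
$\sigma^2\sum_i(\lambda_i/\eta_i)^2+\sum_i\beta_i^2\lambda_i(1-\lambda_i/\eta_i)^2$,
you derive it from scratch by recognizing $\hat{\beta}_d=T_d^{\vec{\phi}}\x'\vec{Y}$ as a deterministic linear smoother (via the SLLN applied to the i.i.d.\ matrices $\vec{\phi}_d^{\x,i}$) and doing a standard bias--variance split with diagonal bookkeeping; this makes the proof self-contained, at the cost of the integrability caveat you rightly flag (the paper sidesteps this by taking the existence of $T_d^{\vec{\phi}}$ for granted in its definition). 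Second, you explicitly justify $\lambda_i/\eta_i\in[0,1]$ by noting that Lemma~2(i)--(ii) make $\Sigma^{1/2}\vec{\phi}_d^{\x}\Sigma^{1/2}$ a symmetric idempotent matrix, whose diagonal entries --- and hence those of its expectation $\Sigma^{1/2}T_d^{\vec{\phi}}\Sigma^{1/2}$ --- lie in $[0,1]$. This closes a genuine gap: the paper asserts $\lambda_i/\eta_i\leq 1$ without proof, and moreover its step $(1-\lambda_i/\eta_i)^2\leq w_i^2$ tacitly requires $1-\lambda_i/\eta_i\geq 0$, which you are the only one to verify. Third, for the lower bound $\tau\geq d^2/p$ you use Cauchy--Schwarz on $\sum_i\lambda_i/\eta_i=d$, which is cleaner and fully rigorous, whereas the paper minimizes $\sum_i t_i^2$ subject to $\sum_i t_i=d$ and argues ``by symmetry'' that $t_i=c$ --- an informal step that would need strict convexity of the objective to be airtight (and whose constraint $t_i\leq 1$ your argument does not even need). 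In short: same architecture, but your version is more self-contained and patches two small rigor gaps in the published proof.
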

\begin{proof}
See Appendix.
\end{proof}
Comparing averaging to the case where we only have one single estimator we see that there are two differences: First the variance due to the model noise $\varepsilon$ turns into $\sigma^2 \tau$ with $\tau \in [d^2/p,d]$, thus $\tau \leq d$. Secondly the shrinkage factors $w_i$ in the bias are now squared, which in total means that the MSE of $\hat{\beta}_d$ is always smaller or equal to the MSE of a single estimator $\cls$. \\
\begin{figure}[t]
\centerline{\mbox{\subfigure{\includegraphics[width=5.3cm]{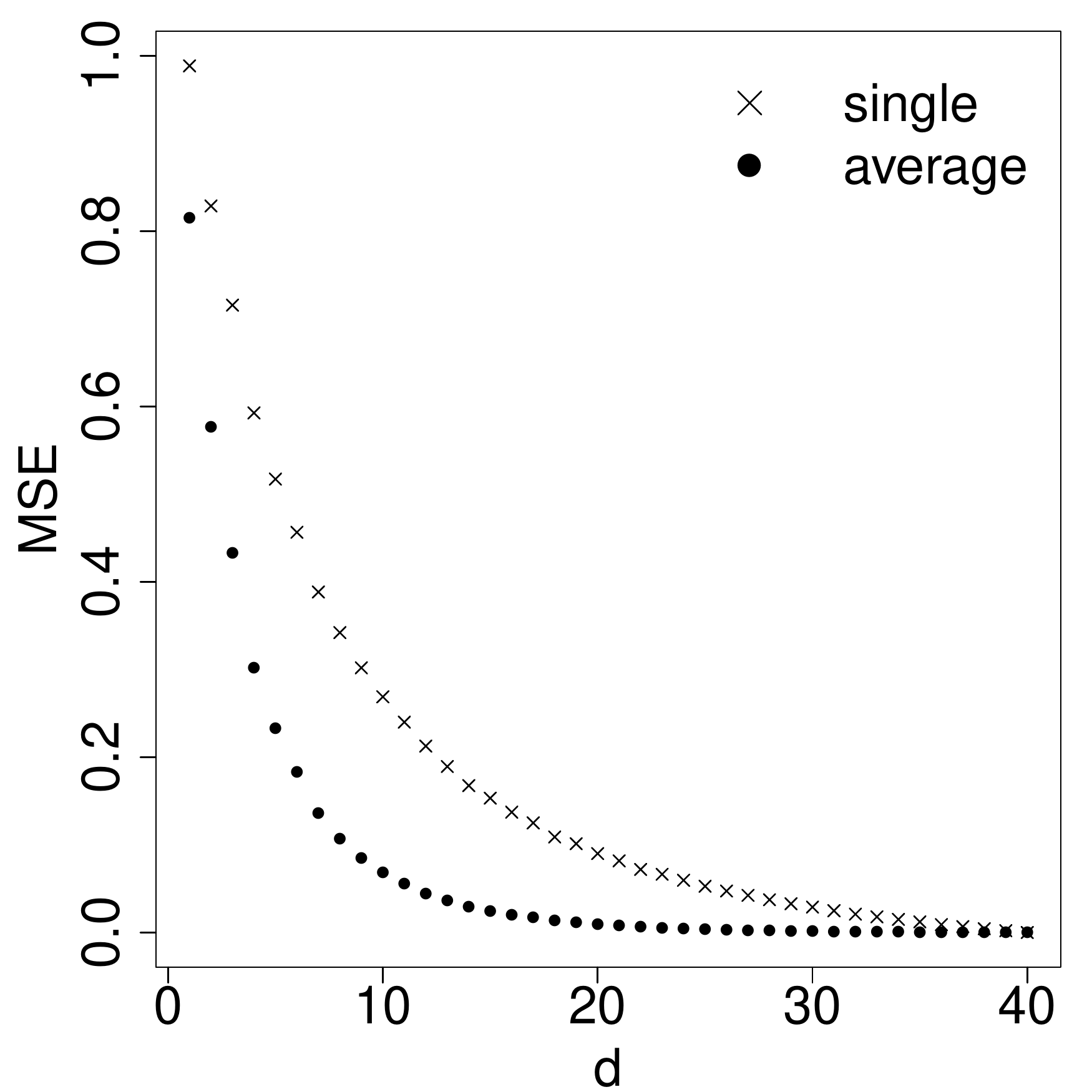}}\quad
\subfigure{\includegraphics[width=5.3cm]{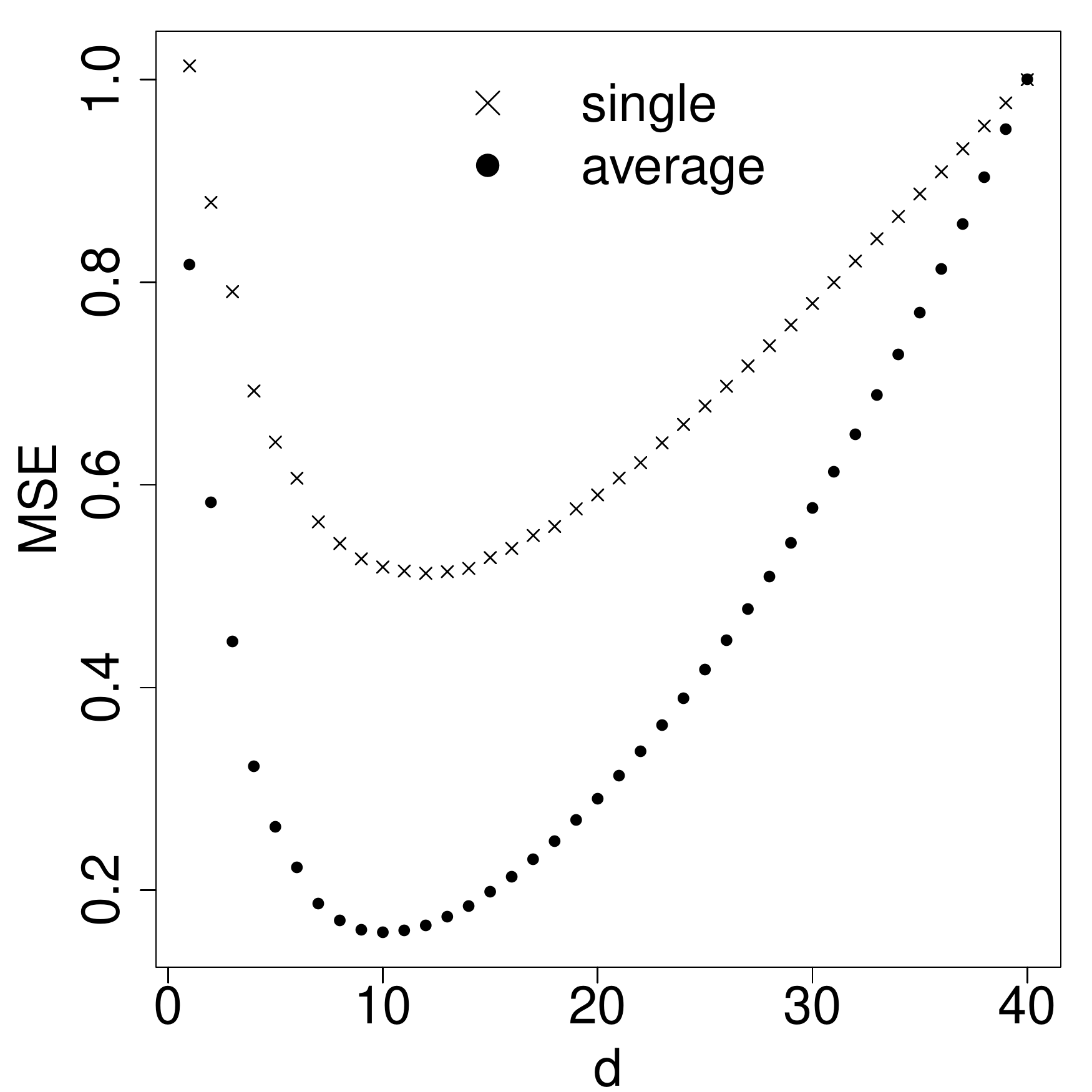}}\quad
\subfigure{\includegraphics[width=5.3cm]{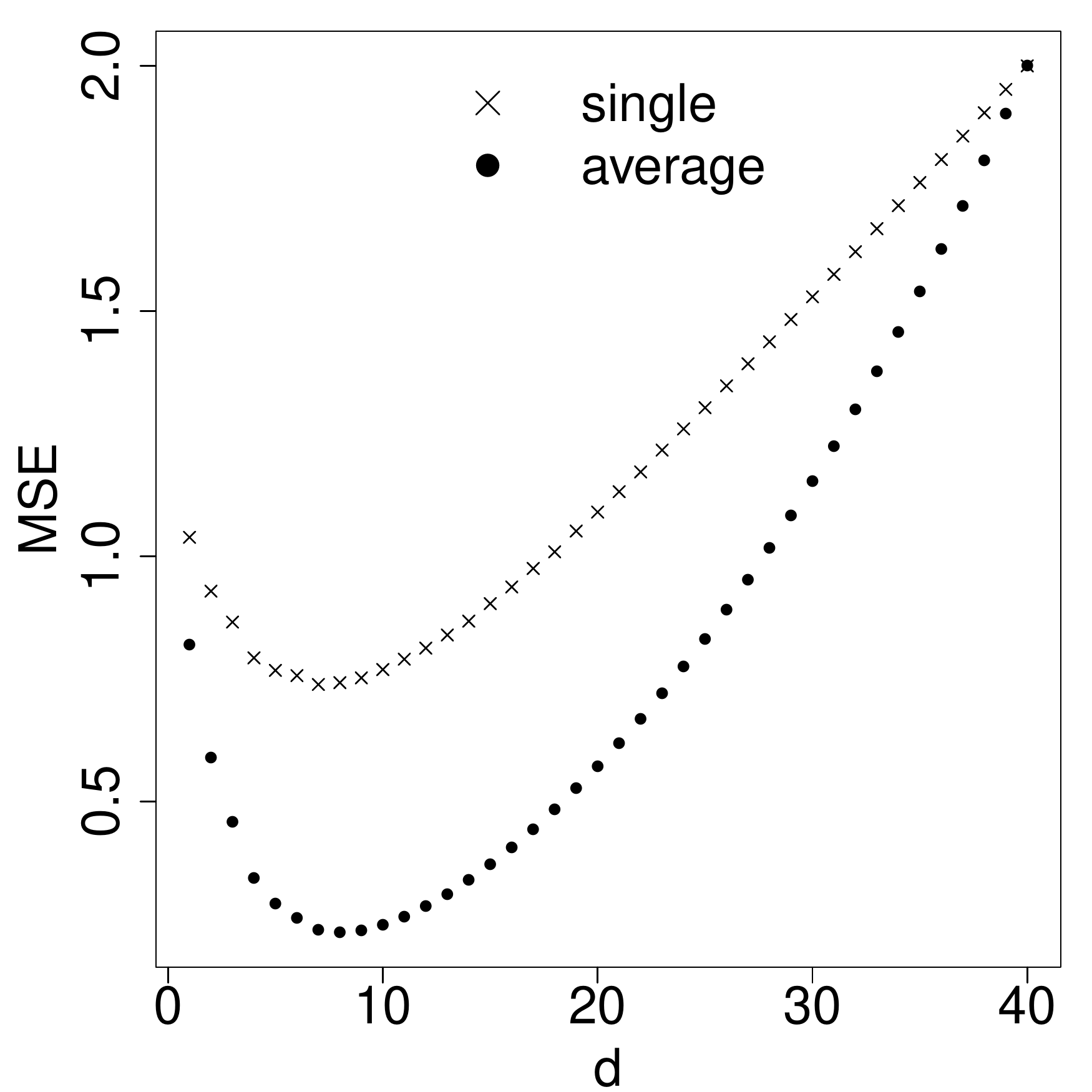}}}}
\caption{MSE of averaged compressed least squares (circle) versus  the MSE of the single estimator (cross) with covariance matrix $\Sigma_{i,i}=1/i$. On the left with $\sigma^2=0$ (only bias), in the middle $\sigma^2=1/40$ and on the left $\sigma^2=1/20$. One can clearly see the quadratic improvement in terms of MSE as predicted by Theorem~4.} \label{fig2}
\end{figure}
We investigate the behavior of $\tau$ as a function of $d$ in three different situations (Figure~\ref{fig2}). 
We first look at two extreme cases of covariance matrices for which the respective upper and lower bounds $[d^2/p,d]$ for $\tau$ are achieved. 
For the lower bound, let $\Sigma=I_{p \times p}$ be orthonormal.  Then $\lambda_i / \eta_i = c$ for all $i$, as above. From 
\begin{equation*}
\sum_{i=1}^p \lambda_i / \eta_i = d 
\end{equation*}
we get $\lambda_i/\eta_i=d/p$. This leads to
\begin{equation*}
\tau=\sum_{i=1}^p (\lambda_i / \eta_i )^2 = p \frac{d^2}{p^2} = \frac{d^2}{p},
\end{equation*}
which reproduces the lower bound.
 
\begin{figure}[t]
\centerline{\mbox{\subfigure{\includegraphics[width=5.3cm]{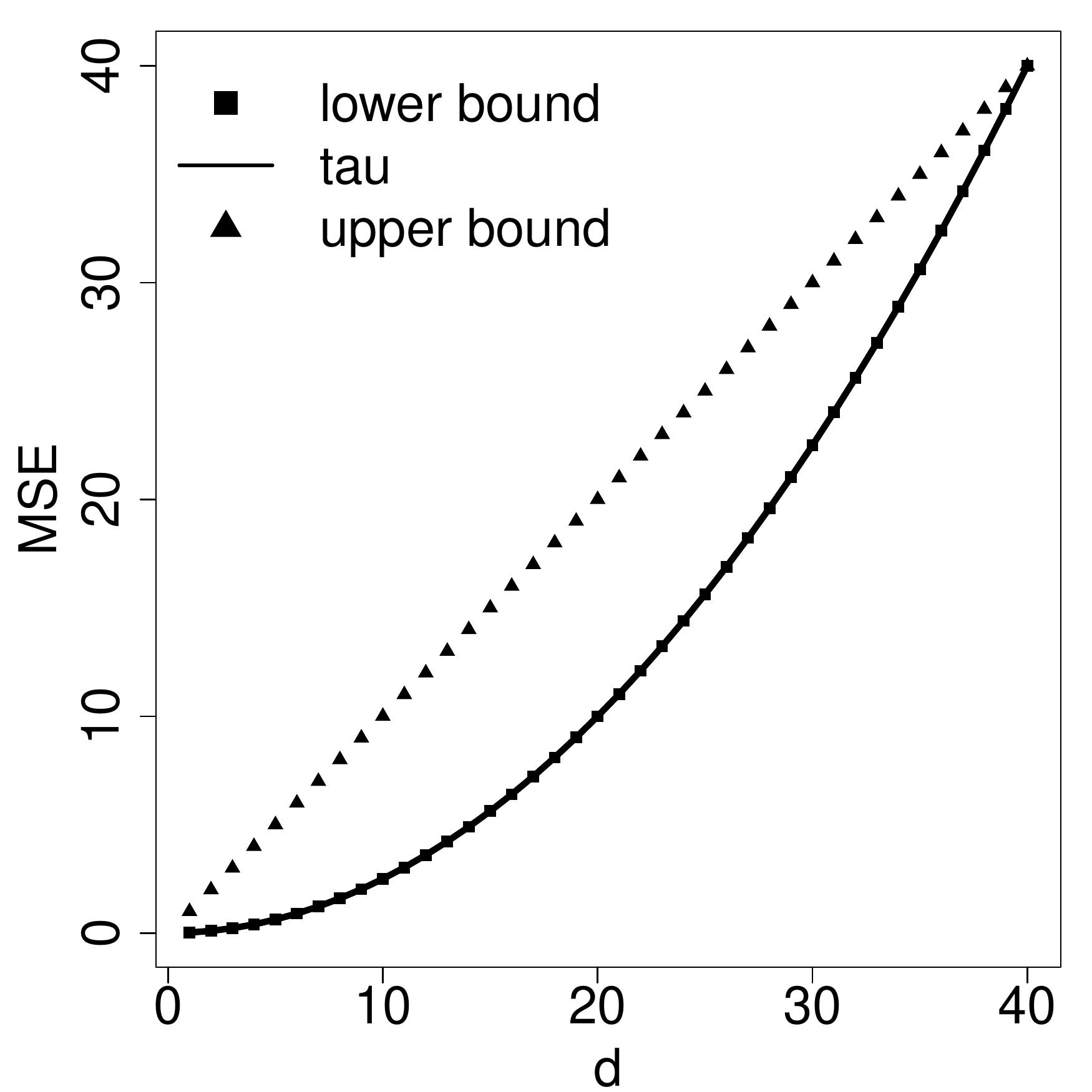}}\quad
\subfigure{\includegraphics[width=5.3cm]{varianceharm.pdf}}\quad
\subfigure{\includegraphics[width=5.3cm]{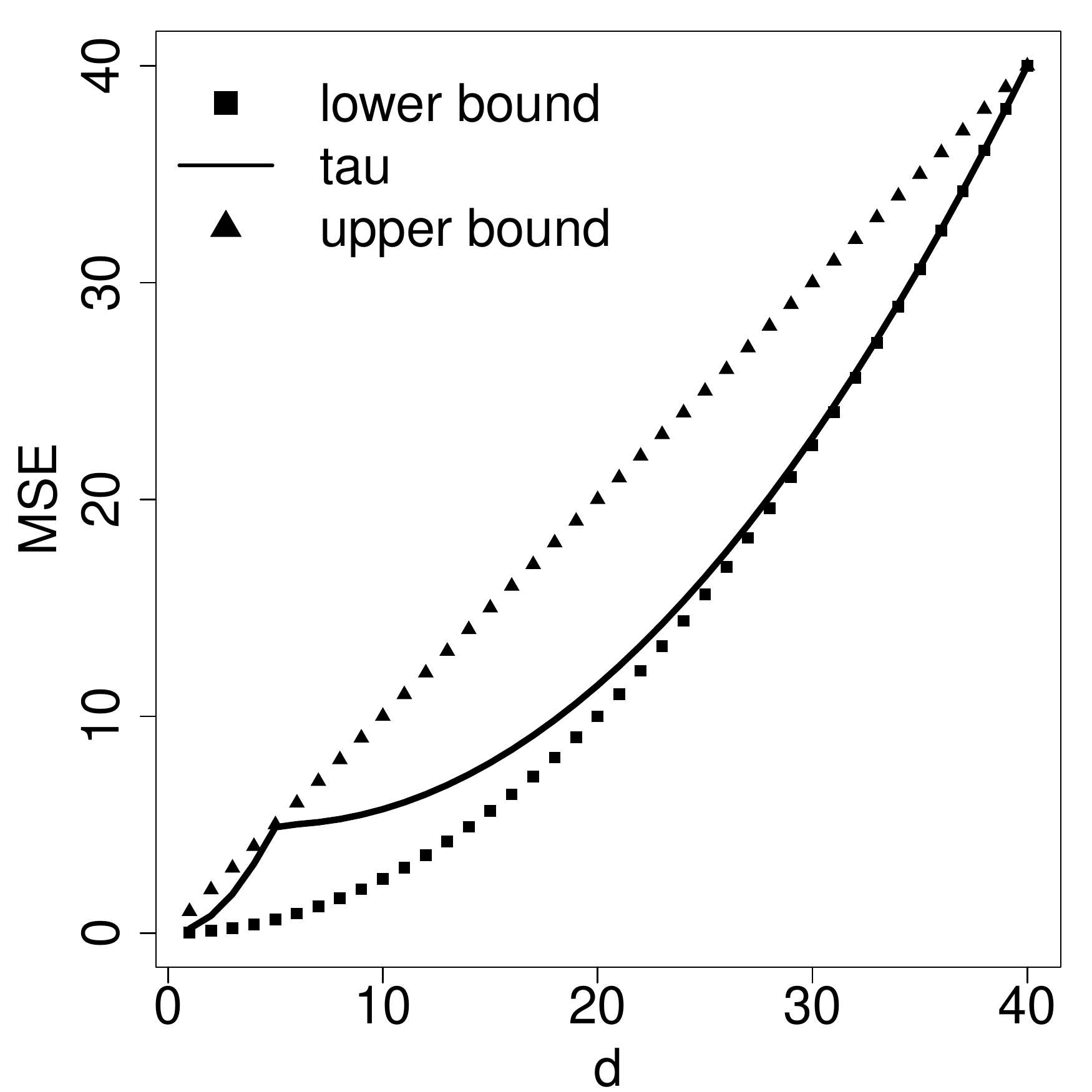}}}}
\caption{Simulations of the variance factor $\tau$ (line) as a function of $d$ for three different covariance matrices and in lower bound ($d^2/p$) and upper bound ($d$) (square, triangle). On the left ($\Sigma=I_{p \times p}$) $\tau$ as proven reaches the lower bound. In the middle ($\Sigma_{i,i}=1/i$) $\tau$ reaches almost the lower bound, indicating that in most practical examples $\tau$ will be very close to the lower bound and thus averaging improves MSE substantially compared to the single estimator. On the right the extreme case example from (\ref{specialsigma}) with $d=5$, where $\tau$ reaches the upper bound for $d=5$. } \label{fig3}
\end{figure}

We will not be able to reproduce the upper bound exactly for all $d \leq p$. But we can show that for any $d$ there exists a covariance matrix $\Sigma$, such that the upper bound is reached. The idea is to consider a covariance matrix that has equal variance in the first $d$ direction and almost zero in the remaining $p-d$. Define the diagonal covariance matrix
\begin{equation}
\label{specialsigma}
\Sigma_{i,j}=\begin{cases}
1, & \text{if } \, i=j \text{ and } i \leq d \\
\epsilon, & \text{if } \, i=j \text{ and } i > d \\
0, & \text{if } \, i \neq j \\
\end{cases}.
\end{equation} 
We show $\lim_{\epsilon \rightarrow 0}\tau=d$. For this decompose $\Phi$ into two matrices $\Phi_d \in \mathbb{R}^{d \times d}$ and $\Phi_r \in \mathbb{R}^{(p-d) \times d}$:
\begin{equation*}
\Phi= \begin{pmatrix}
  \Phi_d \\
  \Phi_r
 \end{pmatrix}.
\end{equation*}
The same way we define $\beta_d$, $\beta_r$, $\x_d$ and $\x_r$. Now we bound the approximation error of $\hat{\beta}_d^{\Phi}$ to extract information about $\lambda_i / \eta_i$. Assume a squared data matrix ($n=p$) $\x=\sqrt{\Sigma}$, then
\begin{align*}
\mathbb{E}_{\Phi}[\underset{\gamma \in \mathbb{R}^d}{\operatorname{argmin}}\| \x \beta-\x \Phi \gamma \|_2^2]& \leq \mathbb{E}_{\Phi}[\| \x\beta-\x\Phi \Phi_d^{-1} \beta_d\|_2^2] \\
&=\mathbb{E}_{\Phi}[\| \x_r \beta_r-\x_r \Phi_r \Phi_d^{-1} \beta_d  \|_2^2] \\
&=\epsilon \mathbb{E}_{\Phi}[\| \beta_r - \Phi_r \Phi_d^{-1} \beta_d\|_2^2]\\
&\leq \epsilon (2\|\beta_r \|_2^2+2\|\beta_d \|_2^2\mathbb{E}_{\Phi}[\|\Phi_r \|_2^2]\mathbb{E}_{\Phi}[\|\Phi_d^{-1} \|_2^2])\\
&\leq \epsilon C,
\end{align*}
where $C$ is independent of $\epsilon$ and bounded since the expectation of the smallest and largest singular values of a random projection is bounded. This means that the approximation error decreases to zero as we let $\epsilon \rightarrow 0$. Applying this to the closed form for the MSE of $\hat{\beta}_d^{\Phi}$ we have that 
\begin{equation*}
\sum_{i=1}^p \beta_i^2 \lambda_i \Big(1-\frac{\lambda_i}{\eta_i}\Big)\leq \sum_{i=1}^d \beta_i^2 \Big(1-\frac{\lambda_i}{\eta_i}\Big) +\epsilon \sum_{i=d+1}^p \beta_i^2 \Big(1-\frac{\lambda_i}{\eta_i}\Big)
\end{equation*}
has to go to zero as $\epsilon \rightarrow 0$, which in turn implies
\begin{equation*}
\lim_{\epsilon \rightarrow 0}\;  \sum_{i=1}^d \beta_i^2 \Big(1-\frac{\lambda_i}{\eta_i}\Big) = 0,
\end{equation*}
and thus $\lim_{\epsilon \rightarrow 0} \; \lambda_i/\eta_i =1$ for all $i \in \{1,...,d\}$. 
This finally yields a limit 
\begin{equation*}
\lim_{\epsilon \rightarrow 0}\; \sum_{i=1}^p \frac{\lambda_i^2}{\eta_i^2}=d.
\end{equation*}

This illustrates that the lower bound $d^2/p$ and upper bound $d$ for the variance factor $\tau$ can both be attained.  
Simulations suggest that $\tau$ is usually close to the lower bound, where the variance of the estimator is reduced by a factor $d/p$ compared to a single iteration of a compressed least-squares estimator, which is on top of the reduction in the bias error term. This shows, perhaps unsurprisingly, that averaging over random projection estimators improves the mean-squared error in a Rao-Blackwellization sense. We have quantified the improvement. In practice, one would have to decide whether to run multiple versions of a compressed least-squares regression in parallel or run a single random projection with a perhaps larger embedding dimension. The computational effort and statistical error tradeoffs will depend on the implementation but the bounds above will give a good basis for a decision.


\section{Discussion}
\label{sec:discuss}

We discussed some known results about the properties of compressed least-squares estimation and proposed possible tighter bounds and exact results for the mean-squared error. While the exact results do not have an explicit representation, they allow nevertheless to quantify the conservative nature of the upper bounds on the error. Moreover, the shown results allow to show a strong similarity of the error of compressed least squares, ridge  and principal component regression. We also discussed the advantages of a form of Rao-Blackwellization, where multiple compressed least-square estimators are averaged over multiple random projections. The latter averaging procedure also allows to compute the estimator trivially in a distributed way and is thus often better suited for large-scale regression analysis. The averaging methodology also motivates the use of compressed least squares in the high dimensional setting where it performs similar to ridge regression and the use of multiple random projection will reduce the variance and result in a non random estimator in the limit, which presents a computationally attractive alternative to ridge regression. 

\bibliographystyle{apalike}
\bibliography{myrefs} 


\section{Appendix}

In this section we give proofs of the statements from the section theoretical results.\\
\textbf{Theorem \ref{clsmse}.} ~\citep{kaban:2014} Assume fixed design and $\mathrm{Rank}(\x) \geq d$, then the AMSE \ref{mse} can be bounded above by
\begin{equation} 
\mathbb{E}_{\vec{\phi}}[\mathbb{E}_{\varepsilon}[\|\x \beta -\x \cls \|_2^2]] \leq \sigma^2 d + \frac{\| \x \beta\|_2^2}{d}+\operatorname{trace}(\x^{\prime}\x)\frac{\| \beta \|_2^2}{d}.
\end{equation}
\begin{proof} (Sketch)
\begin{align*}
\mathbb{E}_{\vec{\phi}}[\mathbb{E}_{\varepsilon}[\|\x\beta-\x \hat{\beta}_{d}^{\vec{\phi}}\|_2^2 ]]&= \mathbb{E}_{\vec{\phi}}[\| \x \beta -\x\vec{\phi} (\vec{\phi}^{\prime} \x^{\prime} \x \vec{\phi})^{-1} \vec{\phi}^{\prime} \x^{\prime} \x \beta \|_2^2]+\sigma^2 d \\
&\leq \mathbb{E}_{\vec{\phi}}[ \| \x \beta -\x\vec{\phi} (\vec{\phi}^{\prime} \x^{\prime} \x \vec{\phi})^{-1} \vec{\phi}^{\prime} \x^{\prime} \x \vec{\phi}\vec{\phi}^{\prime} \beta \|_2^2] +\sigma^2 d \\
&=\mathbb{E}_{\vec{\phi}}[\| \x \beta -\x\vec{\phi}\vec{\phi}^{\prime} \beta \|_2^2]+\sigma^2 d.
\end{align*}
Finally a rather lengthy but straightforward calculation leads to
\begin{equation}
\mathbb{E}_{\vec{\phi}}[\| \x \beta -\x\vec{\phi}\vec{\phi}^{\prime} \beta \|_2^2]=\frac{\| \x \beta\|_2^2}{d}+\operatorname{trace}(\x^{\prime}\x)\frac{\| \beta \|_2^2}{d}
\end{equation}
and thus proving the statement above. \qed
\end{proof}
\textbf{Theorem \ref{clrmseimp}.} Assume $\mathrm{Rank}(\x) \geq d$, then the AMSE (\ref{mse}) can be bounded above by
\begin{equation} 
\mathbb{E}_{\vec{\phi}}[\mathbb{E}_{\varepsilon}[\|\x \beta -\x \cls \|_2^2]] \leq \sigma^2 d +\sum_{i=1}^p \beta_i^2 \lambda_i w_i
\end{equation}
where 
\begin{equation}
w_i = \frac{(1+1/d)\lambda_i^2+(1+2/d)\lambda_i \operatorname{trace}(\Sigma)+\operatorname{trace}(\Sigma)^2/d}{(d+2+1/d)\lambda_i^2+2(1+1/d)\lambda_i \operatorname{trace}(\Sigma)+\operatorname{trace}(\Sigma)^2/d}.\\
\end{equation}
\begin{proof}
We have for all $v \in \mathbb{R}^p$
\begin{equation*}
\mathbb{E}_{\vec{\phi}}[\underset{\hat{\gamma} \in \mathbb{R}^d}{\min}\|\x \beta -\x \vec{\phi} \hat{\gamma} \|_2^2] \leq
\mathbb{E}_{\vec{\phi}}[\|\x \beta -\x \vec{\phi}\vec{\phi}^{\prime} v\|_2^2].
\end{equation*}
Which we can minimize over the whole set $\mathbb{R}^p$:
\begin{equation*}
\mathbb{E}_{\vec{\phi}}[\underset{\hat{\gamma} \in \mathbb{R}^d}{\min}\|\x \beta -\x \vec{\phi} \hat{\gamma} \|_2^2] \leq
\underset{v \in \mathbb{R}^p}{\min}\mathbb{E}_{\vec{\phi}}[\|\x \beta -\x \vec{\phi} \vec{\phi}^{\prime} v\|_2^2].
\end{equation*}
This last expression we can calculate following the same path as in Theorem 1:
\begin{align*}
\mathbb{E}_{\vec{\phi}}[\|\x \beta -\x \vec{\phi} \vec{\phi}^{\prime}v \|_2^2]=&\beta^{\prime} \x^{\prime} \x \beta - 2 \beta^{\prime} \x^{\prime} \x \mathbb{E}_{\vec{\phi}}[\vec{\phi} \vec{\phi}^{\prime}]v\\
&+v^{\prime} \mathbb{E}_{\vec{\phi}}[\vec{\phi} \vec{\phi}^{\prime} \x^{\prime} \x \vec{\phi} \vec{\phi}^{\prime}] v\\
=&\beta^{\prime} \x^{\prime} \x \beta - 2 \beta^{\prime} \x^{\prime} \x v\\
&+(1+1/d)v^{\prime} \x^{\prime} \x  v+\frac{\operatorname{trace}(\Sigma)}{d} \|v \|_2^2,
\end{align*}
where $\Sigma=X^{\prime} X$. Next we minimize the above expression w.r.t $v$. For this we take the derivative w.r.t. $v$ and then we zero the whole expression. This yields
\begin{equation*}
2\Big(1+\frac{1}{d}\Big) \Sigma v +2\frac{\operatorname{trace}(\Sigma)}{d} I_{p \times p} v-2  \Sigma \beta=0.
\end{equation*}
Hence we have
\begin{equation*}
v=\Big(\Big(1+\frac{1}{d}\Big)\Sigma+\frac{\operatorname{trace}(\Sigma)}{d} I_{p \times p}\Big)^{-1} \Sigma \beta,
\end{equation*}
which is element wise equal to
\begin{equation*}
v_i=\frac{\beta_i \lambda_i}{(1+1/d)\lambda_i+\operatorname{trace}(\Sigma)/d}.
\end{equation*}
Define the notation $s=\operatorname{trace}(\Sigma)$. We now plug this back into the original expression and get
\begin{align*}
\underset{v \in \mathbb{R}^p}{\min} \mathbb{E}_{\vec{\phi}}[\|\x \beta -\x \vec{\phi} \vec{\phi}^{\prime}v \|_2^2]=&\beta^{\prime} \Sigma \beta - 2 \beta^{\prime} \Sigma v\\
&+(1+1/d)v^{\prime} \Sigma  v+\frac{s}{d} \|v \|_2^2 \\
=&\sum_{i=1}^p \beta_i^2 \lambda_i-2\beta_i v_i \lambda_i +(1+1/d) v_i^2 \lambda_i+s/d v_i^2 \\
=&\sum_{i=1}^p \Big(\beta_i^2 \lambda_i-2\beta_i^2 \lambda_i \frac{\lambda_i}{(1+1/d)\lambda_i+s/d}\\
&+ \beta_i^2 \lambda_i (1+1/d)\frac{\lambda_i^2}{((1+1/d)\lambda_i+s/d)^2}\\
&+\beta_i^2 \lambda_i \frac{s}{d}\frac{\lambda_i}{((1+1/d)\lambda_i+s/d)^2} \Big)\\
=&\sum_{i=1}^p \beta_i^2 \lambda_i  w_i,
\end{align*} 
by combining the summands we get for $w_i$ the expression mentioned in the theorem. \qed
\end{proof}
\textbf{Theorem \ref{exactmse}.}
Assume $\mathrm{Rank}(\x) \geq d$, then the MSE (\ref{mse}) equals
\begin{equation}
\mathbb{E}_{\vec{\phi}}[\mathbb{E}_{\varepsilon}[\|\x \beta -\x \cls \|_2^2]]=\sigma^2 d+\sum_{i=1}^p \beta_i^2 \lambda_i \Big( 1-\frac{\lambda_i}{\eta_i} \Big).
\end{equation}
Furthermore we have 
\begin{equation}
\label{sumvar}
\sum_{i=1}^p \frac{\lambda_i}{\eta_i}=d.
\end{equation}
\begin{proof}
Calculating the expectation yields
\begin{equation*}
\mathbb{E}_{\vec{\phi}}[\mathbb{E}_{\varepsilon}[\|\x \beta-\x\hat{\beta}_d \|_2^2]]=\beta^{\prime} \Sigma \beta-2\beta^{\prime} \Sigma T_d^{\phi}\Sigma \beta+\mathbb{E}_{\vec{\phi}}[\mathbb{E}_{\varepsilon}[Y^{\prime}\x \phi_d^{\x} \x^{\prime} Y]].
\end{equation*}
Going through these terms we get:
\begin{align*}
&\beta^{\prime} \Sigma \beta=\sum_{i=1}^p \beta_i^2 \lambda_i\\
&\beta^{\prime} \Sigma T_d^{\phi}\Sigma \beta=\sum_{i=1}^p \beta_i^2 \frac{\lambda_i^2}{\eta_i}\\
&\mathbb{E}_{\vec{\phi}}[\mathbb{E}_{\varepsilon}[Y^{\prime}\x \phi_d^{\x} \x^{\prime} Y]]=\beta^{\prime}\Sigma \mathbb{E}_{\vec{\phi}}[\phi_d^{\x} ]\Sigma \beta +\mathbb{E}_{\vec{\phi}}[\mathbb{E}_{\varepsilon}[\varepsilon^{\prime}\x \phi_d^{\x} \x^{\prime} \varepsilon]].
\end{align*}
The first term in the last line equals $\sum_{i=1}^p \beta_i^2 \lambda_i^2 / \eta_i$. The second can be calculated in two ways, both relying on the shuffling property of the trace operator:
\begin{align*}
\mathbb{E}_{\vec{\phi}}[\mathbb{E}_{\varepsilon}[\varepsilon^{\prime}\x \phi_d^{\x} \x^{\prime} \varepsilon]]&=\mathbb{E}_{\varepsilon}[\varepsilon^{\prime}\x T_d^{\x} \x^{\prime} \varepsilon]]=\sigma^2 \operatorname{trace}(\x T_d^{\x} \x^{\prime})=\sigma^2 \operatorname{trace}(\Sigma T_d^{\x} )=\sum_{i=1}^p \frac{\lambda_i}{\eta_i}.\\
\mathbb{E}_{\vec{\phi}}[\mathbb{E}_{\varepsilon}[\varepsilon^{\prime}\x \phi_d^{\x} \x^{\prime} \varepsilon]]&=\sigma^2 \mathbb{E}_{\vec{\phi}}[\operatorname{trace}(\x \phi_d^{\x} \x^{\prime})]=\sigma^2 \mathbb{E}_{\vec{\phi}}[\operatorname{trace}(\Sigma \phi_d^{\x})]=\sigma^2 \mathbb{E}_{\vec{\phi}}[\operatorname{trace}(I_{d \times d})]=\sigma^2 d.
\end{align*}
Adding the first version to the expectation from above we get the exact expected mean squared error. Setting both versions equal we get the equation 
\begin{equation*}
d=\sum_{i=1}^p \frac{\lambda_i}{\eta_i} \,\,\,\, \qed.
\end{equation*} 
\end{proof}
\textbf{Theorem \ref{aclsemse}.}
Assume $\mathrm{Rank}(\x) \geq d$, then there exists a real number $\tau \in [d^2/p,d]$ such that the AMSE of $\hat{\beta}_d$ can be bounded from above by
\begin{equation*}
\mathbb{E}_{\vec{\phi}}[\mathbb{E}_{\varepsilon}[\|\x \beta-\x\hat{\beta}_d \|_2^2]]\leq\sigma^2  \tau+\sum_{i=1}^p \beta_i^2 \lambda_i w_i^2,
\end{equation*}
where the $w_i$'s are given as
\begin{equation*}
w_i = \frac{(1+1/d)\lambda_i^2+(1+2/d)\lambda_i \operatorname{trace}(\Sigma)+\operatorname{trace}(\Sigma)^2/d}{(d+2+1/d)\lambda_i^2+2(1+1/d)\lambda_i \operatorname{trace}(\Sigma)+\operatorname{trace}(\Sigma)^2/d}
\end{equation*}
and
\begin{equation*}
\tau \in [d^2/p,d].
\end{equation*}
\begin{proof}
First a simple calculation ~\citep{kaban:2014} using the closed form solution gives the following equation:
\begin{equation}
\mathbb{E}_{\vec{\phi}}[\mathbb{E}_{\varepsilon}[\|\x \beta -\x \hat{\beta}_d \|_2^2]]=\sigma^2 \sum_{i=1}^p \Big(\frac{\lambda_i}{\eta_i} \Big)^2+\sum_{i=1}^p \beta_i^2 \lambda_i \Big(1-\frac{\lambda_i}{\eta_i}\Big)^2.
\end{equation}
Now using the corollary from the last section we can bound the second term the following way:
\begin{equation}
\Big(1-\frac{\lambda_i}{\eta_i}\Big)^2 \leq w_i^2.
\end{equation}
For the first term we write 
\begin{equation}
\tau =\sum_{i=1}^p \Big(\frac{\lambda_i}{\eta_i} \Big)^2.
\end{equation}
Now note that since $\lambda_i/\eta_i \leq 1$ we have
\begin{equation}
\Big(\frac{\lambda_i}{\eta_i}\Big)^2 \leq \frac{\lambda_i}{\eta_i} 
\end{equation}
and thus we get the upper bound by
\begin{equation}
\sum_{i=1}^p \Big(\frac{\lambda_i}{\eta_i} \Big)^2 \leq \sum_{i=1}^p \frac{\lambda_i}{\eta_i} =d.
\end{equation}
For the lower bound of $\tau$ we consider an optimisation problem. Denote $t_i= \frac{\lambda_i}{\eta_i}$, then we want to find $t \in \mathbb{R}^p$ such that
\begin{equation*}
\sum_{i=1}^p t_i^2 \textrm{ is minimal } 
\end{equation*} 
under the restrictions that
\begin{align*}
\sum_{i=1}^p t_i = d \textrm{ and } 0 \leq t_i \leq 1.
\end{align*}
The problem is symmetric in each coordinate and thus $t_i=c$. Plugging this into the linear sum gives $c=d/p$ and we calculate the quadratic term to give the result claimed in the theorem. \qed
\end{proof}

\end{document}